\documentclass[a4paper,oneside,reqno,12pt]{amsart}

\usepackage[utf8]{inputenc}
\usepackage[T1]{fontenc}
\usepackage[english]{babel}
\usepackage{lmodern}
\usepackage{amsmath,amssymb,amsfonts,mathtools}

\usepackage[
  backend=biber,
  style=numeric,
  sorting=none,
  giveninits=true,
  maxbibnames=99,
  doi=true,
  url=true,
  eprint=true
]{biblatex}

\usepackage{amsmath,amssymb,amsthm}
\usepackage{mathtools}
\usepackage{xcolor}
\usepackage{graphicx}
\usepackage{tikz}
\usetikzlibrary{arrows.meta,calc,positioning}
\usepackage[colorlinks=true,linkcolor=blue,citecolor=blue,urlcolor=blue]{hyperref}

\DeclareNameAlias{sortname}{given-family}

\renewbibmacro{in:}{}

\DeclareFieldFormat[article]{title}{#1}

\usepackage{fullpage}
\usepackage{hyperref}
\newcommand{\secref}[1]{\hyperref[#1]{\textcolor{red}{\ref*{#1}}}}
\hypersetup{
  colorlinks=true,
  linkcolor=blue,
  citecolor=blue,
  urlcolor=blue
}

\newtheorem{theorem}{Theorem}
\newtheorem{lemma}[theorem]{Lemma}
\newtheorem{corollary}[theorem]{Corollary}

\newtheorem{problem}[theorem]{Problem}
\theoremstyle{definition}
\newtheorem{definition}[theorem]{Definition}
\theoremstyle{remark}

\DeclareMathOperator{\conv}{conv}
\DeclareMathOperator{\aff}{aff}

\DeclareMathOperator{\Span}{span}
\DeclareMathOperator{\cone}{cone}

\title[Colorful Carath\'eodory for spanning trees]{Spanning \(k\)-trees and the colorful Carath\'eodory theorem }

\author{Mikhail Bludov and Alexander Polyanskii}
\address{Mikhail Bludov, Moscow Institute of Physics and Technology, Dolgoprudny, Moscow region, Russia }
\email{\href{mailto:bludov.mv@phystech.edu}{bludov.mv@phystech.edu}}
\address{Alexander Polyanskii, Department of Mathematics, Emory University, 400 Dowman Drive, Atlanta, GA 30322, USA }
\email{\href{mailto:apolian@emory.edu}{apolian@emory.edu}}
\urladdr{\url{https://polyanskii.com}}
\keywords{colorful Carath\'eodory theorem, spanning trees, simplicial complexes, matroids}
\subjclass[2020]{52A35, 05E45, 55N10}

\begin{document}

\begin{abstract}
Very recently, using Meshulam's lemma, Blagojevi\'c proved a constrained version of the colorful Carath\'eodory theorem for joins of bipartite spanning trees and wedge of spheres. Our main contribution extends his result from joins of bipartite spanning trees with wedges of spheres to joins of spanning \(k\)-trees with wedges of spheres. Our proof is elementary and avoids the topological machinery. We also discuss a homological variation of spanning \(k\)-trees and some Carath\'eodory-type results for them.
\end{abstract}
\maketitle

\section{Introduction}

The colorful Carath\'eodory theorem of B\'ar\'any~\cite{Barany1982} is one of the
fundamental results in discrete geometry. It states that if
\(X_1,\ldots,X_{d+1}\) are finite subsets of \(\mathbb R^d\) such that the origin
\(0\) lies in \(\conv X_i\) for every \(i\), then there are points
\(x_i\in X_i\) for all \(i\), such that
\(0\in \conv\{x_1,\ldots,x_{d+1}\}\). We refer the interested reader to the recent
survey~\cite[Section 3.1]{DeLoeraGoaocMeunierMustafa2019} for recent progress, as well
as connections to other classical theorems in the field.

In this paper, we use the standard simplicial-complex formulation of the colorful
Carath\'eodory theorem. We assume familiarity with basic notions such as abstract
simplicial complexes, geometric realizations, faces and their dimensions, joins, and
\(d\)-spheres. To keep the introduction light while making the paper self-contained,
we recall the relevant definitions in Section~\ref{section notation}.

Let \(K\) be an abstract simplicial complex, and $|K|$ be its geometric realization. We say that a map
\(A:|K|\to\mathbb R^d\) is \textit{affine} if it is determined for vertices of $|K|$ and then affinely extended for all faces of $|K|$. We use the following notational conversion. For a face \(\sigma\in K\), we write
\(A(\sigma):=A(|\sigma|^\circ)\), where $|\sigma|^\circ$ is the open simplex of the geometric realization of $\sigma$. Similarly, for any $L\subset K$, we write $A(L):=\cup_{\sigma\in L} A(\sigma)$.

Throughout this section, let \(V_1,\ldots,V_{d+1}\) be pairwise disjoint finite sets,
and put \(V=V_1\sqcup\cdots\sqcup V_{d+1}\). Let \(\Delta_V:=2^V\) denote the abstract simplex on
the vertex set \(V\). When finite sets appear in a join, we regard them as \(0\)-dimensional simplicial
complexes, whose only non-empty faces are the one-point subsets. Thus
\(V_1*\cdots *V_{d+1}\) denotes the complex of all subsets of
\(V_1\sqcup\cdots\sqcup V_{d+1}\) containing at most one vertex from each \(V_i\).

We say that an affine map \(A:|\Delta_V|\to\mathbb R^d\) is a \emph{colorful
Carath\'eodory map} if \(0\in A(V_i)\) for every
\(i\). With this terminology, the colorful Carath\'eodory theorem can be
reformulated as follows.

\begin{theorem}[Colorful Carath\'eodory theorem]
\label{theorem colorful caratheodory}
If \(A:|\Delta_V|\to\mathbb R^d\) is a colorful Carath\'eodory map, then there exists a
face \(\sigma\in V_1*\cdots *V_{d+1}\) such that \(0\in A(\sigma)\).
\end{theorem}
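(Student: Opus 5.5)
We prove Theorem~\ref{theorem colorful caratheodory} by Bárány's original extremal argument. Here \(A(V_i)\) is the union of the images of the open faces of \(\Delta_V\) spanned by subsets of \(V_i\), so it equals \(\conv A(V_i)\). The hypothesis therefore says \(0\in\conv\{A(v):v\in V_i\}\) for each \(i\). The conclusion asks for a colorful set \(\sigma=\{v_1,\dots,v_m\}\), with \(v_i\in V_i\), such that \(0\) is a positive convex combination of \(A(v_1),\dots,A(v_m)\). It suffices to find a transversal \(\tau=\{v_1,\dots,v_{d+1}\}\) with \(0\in\conv A(\tau)\). Then we take \(\sigma\subseteq\tau\) to be the support of a convex combination representing \(0\), which is nonempty and colorful.

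Among all transversals \(\tau\in V_1*\cdots*V_{d+1}\) with exactly one vertex of each color, of which there are finitely many, we choose one minimizing the Euclidean distance \(\operatorname{dist}(0,\conv A(\tau))\). Suppose for contradiction that this minimum is positive. Let \(p\) be the closest point of \(\conv A(\tau)\) to \(0\), and let \(H=\{x:\langle x,p\rangle=\langle p,p\rangle\}\) be the supporting hyperplane through \(p\) orthogonal to \(p\). Then every \(A(v_j)\) satisfies \(\langle A(v_j),p\rangle\ge |p|^2>0\).

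The point \(p\) lies in a face of \(\conv A(\tau)\). By Carathéodory's theorem applied inside \(H\), which is \((d-1)\)-dimensional, \(p\) is a convex combination of at most \(d\) of the points \(A(v_j)\). Hence some index \(i\) has \(A(v_i)\) not needed, meaning \(p\in\conv A(\tau\setminus\{v_i\})\).

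Now use \(0\in\conv A(V_i)\). Since \(\langle 0,p\rangle=0<|p|^2\), some \(w\in V_i\) satisfies \(\langle A(w),p\rangle<|p|^2\). Replace \(v_i\) by \(w\) to get a new transversal \(\tau'\). The segment from \(p\) to \(A(w)\) lies in \(\conv A(\tau')\). Because \(\langle A(w)-p,\,0-p\rangle=|p|^2-\langle A(w),p\rangle>0\), points on this segment near \(p\) are strictly closer to \(0\) than \(p\). This contradicts minimality, so the minimum distance is \(0\).

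The main step needing care is the Carathéodory reduction to at most \(d\) points on \(H\). Since \(p\in H\cap\conv A(\tau)\) and every \(A(v_j)\) lies in the closed halfspace \(\langle x,p\rangle\ge|p|^2\), any convex representation of \(p\) uses only points lying on \(H\). Carathéodory's theorem in the \((d-1)\)-dimensional affine space \(H\) then bounds the support size by \(d\), so at least one of the \(d+1\) colors is free to be swapped. The remaining steps are routine.
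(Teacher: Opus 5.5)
Your proof is correct. It is B\'ar\'any's original argument: take the transversal whose image simplex is nearest to the origin, use Carath\'eodory's theorem inside the supporting hyperplane to free one color, and swap in a vertex of that color lying on the origin's side. The paper does not reprove this theorem; it cites~\cite{Barany1982} and uses the result as a black box in Section~\ref{section proof}, describing its own proof of Theorem~\ref{theorem main} as an adaptation of exactly this nearest-simplex idea, with the Euclidean distance replaced by an extremal choice of \(\ell(x_\delta)\) and the single swap replaced by Lemma~\ref{cor:face-covered} applied to a join of spheres.
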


Very recently, Blagojevi\'c~\cite[Problem 1.5]{Blag} proposed a research problem. One special case can be formulated as follows.

\begin{problem}
Describe all minimal subcomplexes \(K\subseteq V_1*\cdots *V_{d+1}\) with the
following property: for every colorful Carath\'eodory map \(A:|\Delta_V|\to\mathbb R^d\),
there exists a face \(\sigma\in K\) such that \(0\in A(\sigma)\).
\end{problem}

Towards this problem, Blagojevi\'c recalled to the readers that the following variation holds, which is a special case of the very colorful Carath\'eodory theorem; see~\cite{HolmsenPachTverberg2008,ArochaBaranyBrachoFabilaMontejano2009}.
\begin{theorem}
\label{theorem k=0}    
If \(A:|\Delta_V|\to\mathbb R^d\) is a colorful Carath\'eodory map and $v_1\in V_1$ is a vertex, then there exists a
face \(\sigma\in \{v_1\}*V_2*\cdots *V_{d+1}\) such that \(0\in A(\sigma)\).
\end{theorem}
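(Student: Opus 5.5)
The plan is to reduce the statement to a purely convex-geometric claim and then prove that claim by a nearest-point (Bárány-type) argument. The one extra ingredient beyond Bárány is handling the fixed vertex \(v_1\), and that is where I expect the difficulty.

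\emph{Reduction.} By definition, \(A\) is determined by the points \(a_v:=A(v)\), \(v\in V\), and is affine on each simplex. So \(A(\sigma)=A(|\sigma|^\circ)\) is the set of convex combinations \(\sum_{v\in\sigma}\lambda_v a_v\) with all \(\lambda_v>0\). The hypothesis \(0\in A(V_i)\), where \(A(V_i)\) is the union over the faces of the simplex on \(V_i\), therefore says exactly that \(0\in\conv\{a_v: v\in V_i\}\). Conversely, suppose \(0\in\conv\{a_v:v\in T\}\) for some \(T\in\{v_1\}*V_2*\cdots*V_{d+1}\). Take a face \(\sigma\subseteq T\) minimal with \(0\in\conv\{a_v:v\in\sigma\}\). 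Then all coefficients in that representation are positive, so \(0\in A(\sigma)\). Moreover \(\sigma\) is still a face of \(\{v_1\}*V_2*\cdots*V_{d+1}\), since this complex is closed under taking subsets. Hence it suffices to find a transversal \(T=\{v_1,u_2,\dots,u_{d+1}\}\) with \(u_j\in V_j\) and \(0\in\conv a(T)\).

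\emph{Main argument.} Among all such transversals, choose \(T\) minimizing \(\operatorname{dist}(0,\conv a(T))\), and suppose for contradiction that this minimum is positive. Let \(p\) be the nearest point and \(H=\{x:\langle x,p\rangle=|p|^2\}\). Then all of \(a(T)\) lies in \(\{\langle x,p\rangle\ge |p|^2\}\), and \(p\in\conv(a(T)\cap H)\) with \(\dim H=d-1\).

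I then take a minimal set \(F\subseteq T\) with \(p\in\conv a(F)\); by Carathéodory in \(H\), \(|F|\le d\). Two cases give an immediate contradiction.
\begin{itemize}
\item If \(F\) misses some color \(j\ge 2\), use \(0\in\conv a(V_j)\) to pick \(w\in V_j\) with \(\langle a_w,p\rangle<|p|^2\). Replace \(u_j\) by \(w\). The new hull contains \(\conv(a(F)\cup\{a_w\})\), hence a segment from \(p\) entering the open half-space \(\{\langle x,p\rangle<|p|^2\}\). This produces points closer to \(0\) than \(p\), contradicting minimality.
\item If \(a(T)\cap H\) has \(d+1\) points, or the points \(a(u_2),\dots,a(u_{d+1})\) are affinely dependent in \(H\), I eliminate points along an affine dependence of the points in \(H\). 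Its coefficients sum to zero, so both signs occur. Moving in the two opposite directions deletes two different points, so one of the two choices deletes some \(u_j\) with \(j\ge2\) rather than \(v_1\). This reduces to the previous case.
\end{itemize}

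\emph{Main obstacle.} The remaining case is \(F=\{u_2,\dots,u_{d+1}\}\) with \(a(u_2),\dots,a(u_{d+1})\) affinely independent in \(H\) and \(a(v_1)\notin H\). Here no color \(j\ge2\) can be swapped naively, and \(v_1\) may not be swapped at all. My first attempt is a finer potential: among transversals attaining the minimal distance, take one minimizing \(\langle a(v_1),p\rangle\) or a lexicographic refinement, and show that swapping some \(u_j\) for a \(w\) below \(H\) either strictly decreases the distance or improves the refinement.

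If that fails, I switch to the standard route to the very colorful theorem. Project along the line \(\mathbb R a(v_1)\) onto \(\mathbb R^{d-1}\); the \(d\) colors \(V_2,\dots,V_{d+1}\) still capture \(0\) in their hulls there. I then show, via a parity or degree count of transversal \((d-1)\)-simplices crossing the ray \(\{-t\,a(v_1):t\ge0\}\) (after a generic perturbation, which is harmless by compactness), that some transversal \(\{u_2,\dots,u_{d+1}\}\) meets that ray. Such a transversal gives \(0\in\conv a(\{v_1,u_2,\dots,u_{d+1}\})\), which finishes the proof by the reduction.
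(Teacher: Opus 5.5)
Your reduction to finding a transversal $\{v_1,u_2,\dots,u_{d+1}\}$ whose convex hull contains $0$ is fine, and so are the two easy cases of the nearest-point argument. The gap is the case you call the main obstacle: $p$ in the relative interior of $\conv\{a(u_2),\dots,a(u_{d+1})\}$ and $a(v_1)$ strictly beyond $H$. That case is the entire content of the statement, and neither continuation you sketch closes it.

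The first continuation cannot work, because the distance has single-swap local minima of exactly this type. Take $d=2$ and $a(v_1)=(0,10)$, with a second vertex of $V_1$ at $(0,-10)$. Let $V_2=\{u_2,w_2\}$ with $a(u_2)=(-1,1)$, $a(w_2)=(100,-100)$, and $V_3=\{u_3,w_3\}$ with $a(u_3)=(1,1)$, $a(w_3)=(-100,-100)$. The triangle on $\{v_1,u_2,u_3\}$ is at distance $1$ from $0$. Its nearest point $p=(0,1)$ lies in the relative interior of $[a(u_2),a(u_3)]$, and $a(v_1)\notin H$, so you are in your hard case. Both $w_2$ and $w_3$ lie strictly below $H$, yet each single swap gives a triangle at distance about $1.414$. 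Meanwhile $\{v_1,w_2,w_3\}$ does contain $0$. So the local claim you would need (some swap decreases the distance, or ties and improves a refinement) is false. Minimality used only through single swaps cannot produce the contradiction.

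The second continuation points in the right direction, but the decisive step is only named. Colorful Carath\'eodory for the projected colors gives a transversal meeting the line $\mathbb R a(v_1)$, but possibly on the wrong side of $0$, and that sign is the whole difficulty. A parity count of crossings over all transversal $(d-1)$-simplices does not settle it. The complex $V_2*\cdots*V_{d+1}$ is in general not a mod-$2$ cycle, and even when it is, the parity can be even: for $d=1$ and $a(V_2)=\{-2,-1,1,2\}$ the ray is hit twice.

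What is missing is an extremal choice combined with a cross-polytope sphere and Lemma~\ref{cor:face-covered}. In general position, among transversals $\delta=\{u_2,\dots,u_{d+1}\}$ whose simplex meets $\mathbb R a(v_1)$, say at $s_\delta a(v_1)$, take one with $s_\delta$ minimal. If $s_\delta>0$, pick $v_j\in V_j$ strictly on the origin's side of the hyperplane $\aff a(\delta)$. Then apply the lemma to the projection along $a(v_1)$ of the $(d-1)$-sphere $\{u_2,v_2\}*\cdots*\{u_{d+1},v_{d+1}\}$. The other facet it produces uses some $v_j$, so it meets the line at a parameter strictly smaller than $s_\delta$, a contradiction.

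The paper obtains this statement as the case $k=0$ of Theorem~\ref{theorem main} and runs the same mechanism one dimension up. It starts from a B\'ar\'any simplex $\{w\}\cup\tau$ with $w\in V_1$, and chooses $\tau$ so that $-\cone A(w)$ meets $A(\tau)$ as close to $0$ as possible. It then applies the lemma to the $d$-sphere $\{w,v_1\}*\{u_2,v_2\}*\cdots*\{u_{d+1},v_{d+1}\}$.
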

Moreover, Blagojevi\'c proved the following version of the
colorful Carath\'eodory theorem. A subcomplex \(T\subseteq V_1*V_2\) is called a \emph{spanning tree} if it contains the empty set and all
vertices of \(V_1\sqcup V_2\), and its one-dimensional faces form the edge set of a
spanning tree of the complete bipartite graph with parts \(V_1\) and \(V_2\).

\begin{theorem}[Blagojevi\'c]
\label{theorem blagojevic k=1} 
Let \(T\subseteq V_1*V_2\) be a spanning tree. If \(A:|\Delta_V|\to\mathbb R^d\) is a
colorful Carath\'eodory map, then there exists a face
\(\sigma\in T*V_3*\cdots *V_{d+1}\) such that \(0\in A(\sigma)\).
\end{theorem}

The proof of this theorem relies on the study of the so-called zero-avoiding complexes,
their Alexander duals, and Meshulam's lemma. The reader can find a presentation of
this lemma and its proof in~\cite[Proposition 2.6]{DeLoeraGoaocMeunierMustafa2019};
see also Appendix~\ref{appendix meshulam proof}, where we use it to give an
alternative proof of our main result. Blagojevi\'c's proof appears
to be quite technical. The aim of this note is to show an extension of Theorems~\ref{theorem k=0}  and~\ref{theorem blagojevic k=1} by elementary means.

Indeed, the one-vertex complex from Theorem~\ref{theorem k=0} and the spanning tree from
Theorem~\ref{theorem blagojevic k=1} are the first two instances of the following
higher-dimensional notion. We leave the corresponding verification to the reader as a simple exercise. 

\begin{definition}
\label{definition k-tree}
Let \(0\le k\le d\). We call a subcomplex
\(T\subseteq V_1*\cdots *V_{k+1}\) a \emph{spanning \(k\)-tree} if it contains the full
\((k-1)\)-skeleton of \(V_1*\cdots *V_{k+1}\), that is, all faces of dimension at most
\(k-1\), and satisfies the following two conditions:
\begin{enumerate}
    \item \(T\) contains no $k$-spheres;
    \item $T$ is maximal with respect to (1), that is, for every facet \(\sigma\) of \(V_1*\cdots *V_{k+1}\)  that does not belong to \(T\), the complex \(T\cup\{\sigma\}\) contains a unique $k$-sphere.
\end{enumerate}
\end{definition}
We prove the following theorem.

\begin{theorem}
\label{theorem main}
Let \(T\subseteq V_1*\cdots *V_{k+1}\) be a spanning
\(k\)-tree for some $0\leq k\leq d$. If
\(A:|\Delta_V|\to\mathbb R^d\) is a colorful Carath\'eodory map, then there exists a face
\(\sigma\in T*V_{k+2}*\dots *V_{d+1}\) such that \(0\in A(\sigma)\).
\end{theorem}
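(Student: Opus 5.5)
The plan is a parity (mod~\(2\) cochain) argument. It reduces Theorem~\ref{theorem main} to the colorful Carath\'eodory theorem (Theorem~\ref{theorem colorful caratheodory}) together with the purely combinatorial structure of spanning \(k\)-trees.

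\textbf{Step 1: reduce to general position.} Perturb the images \(A(v)\), \(v\in V\), so that \(0\) lies in no set \(A(\tau)\) with \(\dim\tau\le d-1\). This must be done while keeping the colorful hypothesis. I would first replace each \(V_i\) by a minimal subset whose convex hull contains \(0\) in its relative interior, perturb generically inside, and then pass to the limit. The set of faces is finite and the statement ``\(0\in\) closure of \(A(\sigma)\) for some face \(\sigma\) of \(T*V_{k+2}*\cdots*V_{d+1}\)'' is closed. If \(0\) lies in the closed simplex of \(\sigma\), then it lies in \(A(\sigma')\) for some face \(\sigma'\subseteq\sigma\), and \(\sigma'\) is still in the complex. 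Hence the general position case suffices.

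\textbf{Step 2: the parity cochain.} In general position, define the \(\mathbb Z/2\)-valued \(d\)-cochain \(c\) on \(V_1*\cdots*V_{d+1}\) by \(c(\sigma)=1\) iff \(0\in A(\sigma)\). The key claim is that \(c(z)=0\) for every \(d\)-cycle \(z\) of \(V_1*\cdots*V_{d+1}\) over \(\mathbb Z/2\).

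Since \(0\) avoids the images of all \((d-1)\)-faces, the number of facets of \(z\) covering \(0\) is the mod~\(2\) degree of the induced map \(|z|\to\mathbb R^d\) at \(0\). I would compute it by moving \(0\) along a generic ray to infinity: crossings happen through \((d-1)\)-faces, each of which lies in an even number of facets of \(z\). This shows the degree is \(0\).

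\textbf{Step 3: use the tree structure.} By Theorem~\ref{theorem colorful caratheodory}, \(c\) is nonzero on some facet \(\tau*\rho\), where \(\tau\in V_1*\cdots*V_{k+1}\) and \(\rho\in V_{k+2}*\cdots*V_{d+1}\) are facets. Suppose for contradiction that \(c\) vanishes on all facets of \(T*V_{k+2}*\cdots*V_{d+1}\). Then \(\tau\notin T\).

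By condition (2) of Definition~\ref{definition k-tree}, \(T\cup\{\tau\}\) contains a \(k\)-sphere. This sphere necessarily contains \(\tau\), since \(T\) has none, so it gives a \(k\)-cycle \(z_\tau\) whose facets other than \(\tau\) lie in \(T\). The join \(z_\tau*\rho\) is then a \(d\)-cycle, because \(\partial(z*\rho)=\partial z*\rho+z*\partial\rho\) and \(\rho\) is a \(0\)-cycle-join with boundary handled in the augmented sense. More carefully, one verifies directly that every \((d-1)\)-face of \(z_\tau*\rho\) lies in an even number of its facets.

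Step~2 gives \(c(z_\tau*\rho)=0\), so
\[
c(\tau*\rho)=\sum_{\tau'\in z_\tau,\ \tau'\neq\tau} c(\tau'*\rho)=0,
\]
since each \(\tau'*\rho\) with \(\tau'\ne\tau\) lies in \(T*V_{k+2}*\cdots*V_{d+1}\). This contradicts \(c(\tau*\rho)=1\).

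\textbf{Expected main obstacle.} I expect the main difficulty to be Step~1: perturbing while preserving \(0\in A(V_i)\) and obtaining genuine general position. The cleanest route is probably to restrict each \(V_i\) to an affinely independent minimal subset containing \(0\) in its relative interior, then perturb the remaining freedom. A secondary point is justifying that the \(k\)-sphere in condition (2) carries a \(\mathbb Z/2\) fundamental cycle using \(\tau\). This follows from the definition of a \(k\)-sphere as a triangulated (pseudo)manifold: every \((k-1)\)-face lies in exactly two facets.
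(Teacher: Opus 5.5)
\textbf{Step~3 fails for \(k<d\).} The chain \(z_\tau*\rho\) is not a cycle. The factor \(\rho\) is a single \((d-k-1)\)-simplex, not a cycle, so \(\partial(z_\tau*\rho)=z_\tau*\partial\rho\neq 0\). Concretely, for every facet \(\tau'\) of \(z_\tau\) and every \(u\in\rho\), the \((d-1)\)-face \(\tau'\cup(\rho\setminus\{u\})\) lies in exactly one facet of \(z_\tau*\rho\), namely \(\tau'\cup\rho\). Consequently the identity \(c(\tau*\rho)=\sum_{\tau'\neq\tau}c(\tau'*\rho)\) is false in general.

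For a counterexample, take \(d=1\), \(k=0\), \(V_1=\{v_1,w\}\), \(V_2=\{u,u'\}\), \(T=\{\varnothing,\{v_1\}\}\), and \(A(v_1)=1\), \(A(w)=-1\), \(A(u)=2\), \(A(u')=-2\). Then \(0\in A(\{w,u\})\) but \(0\notin A(\{v_1,u\})\). The theorem is rescued only by \(\{v_1,u'\}\), that is, by changing \(\rho\). Your argument is valid only when \(\rho=\varnothing\), i.e.\ \(k=d\), which is exactly the easy case in the paper. Step~2 itself is fine; it is the paper's Lemma~\ref{cor:face-covered}.

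\textbf{Why a parity count cannot be repaired here.} To get a genuine \(d\)-cycle you must join \(z_\tau\) with a \((d-k-1)\)-cycle of \(V_{k+2}*\cdots*V_{d+1}\). An example is \(\{u_{k+2},v_{k+2}\}*\cdots*\{u_{d+1},v_{d+1}\}\), where \(\rho=\{u_{k+2},\dots,u_{d+1}\}\). But this cycle contains all \(2^{d-k}\) facets \(\tau\cup\rho'\), none of which lies in \(T*V_{k+2}*\cdots*V_{d+1}\). Counting mod \(2\) then only says that an even number of its facets cover \(0\). That is consistent with \(0\) being covered only by \(\tau\cup\rho\) and some \(\tau\cup\rho'\) with \(\rho'\neq\rho\).

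\textbf{The missing idea} is the B\'ar\'any-type extremal choice made in the paper:
\begin{enumerate}
\item Fix \(\tau\notin T\). Let \(\mathcal D\) be the set of facets \(\delta\) of \(V_{k+2}*\cdots*V_{d+1}\) with \(0\in A(\tau\cup\delta)\), and let \(x_\delta\) be the point of \((-\cone A(\tau))\cap A(\delta)\).
\item Choose \(\rho\in\mathcal D\) maximizing \(\ell(x_\delta)\), where \(\ell\) is a linear functional on \(\Span A(\tau)\) that is a positive constant on \(\aff A(\tau)\).
\item Extend \(\ell\) to a functional \(h\) on \(\mathbb R^d\) that is constant on \(\aff A(\rho)\). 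The colorful hypothesis on \(V_i\), \(i\ge k+2\), then yields \(v_i\in V_i\) with \(h(A(v_i))>h(A(u_i))\).
\item In the resulting \(d\)-sphere, any other facet of the form \(\tau\cup\rho'\) covering \(0\) would satisfy \(\ell(x_{\rho'})>\ell(x_\rho)\), contradicting maximality.
\end{enumerate}
Hence the second covering facet supplied by your Step~2 must lie in \(T*V_{k+2}*\cdots*V_{d+1}\). Step~3 needs this extremal argument in place of a pure parity count.
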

Unlike the proof of Blagojevi\'c, our argument is elementary and based on an adaptation of the original idea of B\'ar\'any to consider the nearest-to-origin simplex. In Appendix~\ref{appendix meshulam proof}, we give a second proof based on Meshulam's lemma and the zero-avoiding complex. In fact, that argument yields a more general sufficient condition on the constraint complex. \smallskip

The paper is organized as follows. In Section~\secref{section notation} we recall the basic definitions, standard facts, and notational conventions used throughout the paper. We also formulate an elementary auxiliary lemma used in the proof of Theorem~\ref{theorem main}. In Section~\secref{section proof} we prove Theorem~\ref{theorem main}. In Section~\secref{generalized k-trees} we recall the necessary terminology and facts from elementary algebraic topology. Also, we introduce \(\mathbb Z_2\)-spanning \(k\)-trees, which generalize the spanning \(k\)-trees from Definition~\ref{definition k-tree}, and prove the corresponding generalization of Theorem~\ref{theorem main}. In Section~\secref{section discussion} we compare spanning \(k\)-trees with \(\mathbb Z_2\)-spanning \(k\)-trees and discuss applications of the preceding results to several Carath\'eodory-type theorems. Finally, Appendix~\ref{appendix meshulam proof} presents the alternative proof via Meshulam's lemma.


\section{Notation and auxiliary facts}
\label{section notation}

Let us briefly recall the standard notation. An \textit{abstract simplicial complex} \(K\) is a family of finite subsets, called \textit{faces}, of some ground set, whose elements are called \textit{vertices}, such that \(K\) is closed under taking subsets: if \(\sigma\in K\) and \(\tau\subseteq \sigma\), then \(\tau\in K\). The dimension of a face \(\sigma\in K\) is the size of $\sigma$ minus $1$.  The dimension of a complex is the maximum dimension of its faces. A face \(\sigma\) of a simplicial complex \(K\) is called a \emph{facet} or \textit{maximal face} if it is inclusion-maximal, that is, if there is no face \(\tau\in K\) such that \(\sigma\subsetneq \tau\). 

If \(K\) and \(L\) are abstract simplicial complexes on disjoint vertex sets, their
\textit{join} is the simplicial complex
\(
        K*L:=\{\sigma\cup\tau:\sigma\in K,\ \tau\in L\}.
\)
The join operation is associative, in the sense that
\(
        K*(L*M)=(K*L)*M
\)
for complexes on pairwise disjoint vertex sets. Thus we shall simply write
\(K*L*M\), and similarly for longer joins.

Let \(K\) be an abstract simplicial complex on a finite vertex set \(V\). To realize
\(K\) geometrically, place each vertex \(v\in V\) at the corresponding standard basis
vector \(e_v\) of \(\mathbb R^V\). For every face \(\sigma\in K\), let
\(|\sigma|:=\conv\{e_v:v\in\sigma\}\). Recall that we denote by \(|\sigma|^\circ\) the relative
interior of this simplex; equivalently, \(|\sigma|^\circ\) consists of those points of \(|\sigma|\) whose support is exactly \(\sigma\). In particular, for every \(v\in V\), we have \(|\{v\}|=|\{v\}|^\circ=\{e_v\}\). The geometric realization \(|K|\) is the disjoint union of
all these \textit{open} simplices:
\[
        |K|=\bigsqcup_{\sigma\in K}|\sigma|^\circ\subset\mathbb R^V.
\]

A finite simplicial complex \(X\) is called a \emph{\(k\)-sphere} if its geometric
realization \(|X|\) is homeomorphic to \(S^k\), that is, there is a continuous
bijection \(|X|\to S^k\) whose inverse is also continuous. We shall use the standard
fact that the join of a \(k_1\)-sphere and a \(k_2\)-sphere is a
\((k_1+k_2+1)\)-sphere.

The following lemma is elementary and can be proved directly. It can also be deduced either from the \(\mathbb Z_2\)-mapping degree argument, see, for example, \cite[Section~2.2]{HatcherAT}, or from the Borsuk--Ulam theorem; see~\cite{Matousek2003}. The \(\mathbb Z_2\)-mapping degree and a generalization of this lemma are discussed in Section~\ref{generalized k-trees}; see Corollary~\ref{degree-cor}.
\begin{lemma}\label{cor:face-covered}
Let \(X\) be a \(k\)-sphere. Then, for an affine map
\(A:|X|\to\mathbb R^k\) and any facet \(\sigma\in X\), we have
\(
        A(\sigma)\subseteq A(X\setminus\{\sigma\}).
\)
\end{lemma}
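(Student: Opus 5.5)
We will prove Lemma~\ref{cor:face-covered} with a parity (degree mod~$2$) argument on a generic line. We first reduce to the case where \(A\) is in general position. Fix a point \(p\in A(\sigma)\); we must show \(p\in A(\tau)\) for some face \(\tau\neq\sigma\) of \(X\). The set \(A(X\setminus\{\sigma\})\) is a finite union of images of closed simplices (all proper faces of \(\sigma\) are in it), so it is closed. Hence it suffices to prove the claim for a dense set of points \(p\) in \(A(\sigma)\) and pass to the limit. The claim is also trivial when \(A|_{|\sigma|}\) is degenerate. In that case \(A(|\sigma|)\) equals the image of its boundary, because the boundary of a degenerate affine simplex covers its image, and so \(p\) is covered by proper faces of \(\sigma\). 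So we assume \(A\) is injective on \(|\sigma|\), and that \(p\) is a generic interior point of \(A(\sigma)\): it lies in no image \(A(|\tau|)\) with \(\dim A(|\tau|)<k\).

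The main step is a parity count. For a generic point \(q\in\mathbb R^k\), let \(N(q)\) be the number of \(k\)-faces \(\tau\) of \(X\) with \(q\in A(\tau)\). We claim \(N(q)\bmod 2\) does not depend on \(q\). To see this, move \(q\) along a generic segment. The segment meets images of \((k-1)\)-faces only at isolated points and avoids images of faces of dimension \(\le k-2\). Now use that \(X\) is a \(k\)-sphere, hence a closed pseudomanifold: every \((k-1)\)-face lies in exactly two \(k\)-faces. (This is the one topological input to justify, via local homology of \(S^k\), i.e.\ the link of a \((k-1)\)-face is a \(0\)-sphere.) When the segment crosses \(A(\rho)\) for a \((k-1)\)-face \(\rho\) with nondegenerate image, the two \(k\)-faces containing \(\rho\) change their incidence with \(q\) by a total of \(0\) or \(2\). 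Faces whose images are degenerate never contain a generic \(q\). Summing over all crossings, \(N\bmod 2\) is constant.

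Finally, \(N(q)=0\) for \(q\) far away, since the image of \(A\) is bounded, so \(N(p)\) is even. But \(p\in A(\sigma)\) gives \(N(p)\ge1\), so some other \(k\)-face \(\tau\neq\sigma\) has \(p\in A(\tau)\). This proves the lemma.

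We have implicitly assumed that \(\sigma\) is \(k\)-dimensional. This holds because a triangulated \(k\)-sphere is pure of dimension \(k\). The step we expect to need the most care is the pseudomanifold property: every \((k-1)\)-face lies in exactly two facets and all facets are \(k\)-dimensional. We would deduce it from invariance of domain and local homology: the link of a face is a homology sphere of the complementary dimension. The rest of the argument is routine general-position bookkeeping.
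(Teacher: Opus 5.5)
Your strategy, showing by a crossing count that a generic point is covered by an even number of \(k\)-faces, is a legitimate elementary version of the \(\mathbb Z_2\)-degree argument the paper uses (Lemma~\ref{degree} and Corollary~\ref{degree-cor}). However, the crossing step is false as written. You only arranged that \(A\) is nondegenerate on \(|\sigma|\), so other \(k\)-faces may still have degenerate images. In that case your claim that the two \(k\)-faces containing a crossed \((k-1)\)-face \(\rho\) change their incidence by a total of \(0\) or \(2\) fails. If \(\tau_1\supset\rho\) is degenerate, then \(A(|\tau_1|)\subseteq\aff A(\rho)\), so \(\tau_1\) never flips while a nondegenerate \(\tau_2\) does, and the contribution of \(\rho\) is odd. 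A concrete case already occurs for \(k=1\): take \(X\) the boundary of the triangle \(abc\), \(A(a)=0\), \(A(b)=A(c)=1\), and \(\sigma=\{a,b\}\). Crossing the point \(1=A(b)\) changes the incidence of \(\{a,b\}\) but not of \(\{b,c\}\). The total change there is still even only because \(\rho'=\{c\}\) is crossed at the same moment. In general, the odd contributions cancel only because each degenerate \(\tau\) has an even number of facets whose images contain the crossing point. That is the \((k-1)\)-dimensional case of the lemma applied to \(\partial\tau\) inside the hyperplane, an induction you have not set up. Your remark that degenerate faces never contain a generic \(q\) does not address this.

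The simplest repair is to carry out the reduction announced in your first sentence for the map itself, not only for the point. Perturb the vertex images to obtain \(A_\varepsilon\to A\) with every \(k\)-face nondegenerate; for such maps your crossing count is correct. Apply the argument to \(A_\varepsilon\) at \(p_\varepsilon=A_\varepsilon(a)\in A_\varepsilon(\sigma)\), where \(a\in|\sigma|^\circ\) and \(A(a)=p\). This gives \(k\)-faces \(\tau_\varepsilon\ne\sigma\) with \(p_\varepsilon\in A_\varepsilon(|\tau_\varepsilon|)\). Pass to a subsequence along which \(\tau_\varepsilon=\tau\) is fixed, and use compactness of \(|\tau|\) to conclude \(p\in A(|\tau|)\subseteq A(X\setminus\{\sigma\})\). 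Your closedness argument only justifies perturbing \(p\), so this extra limit is genuinely needed. The case \(k=0\), where \(\mathbb R^0\) has no far-away points, should also be treated separately; it is trivial. The paper's homological route avoids the issue altogether. Since \([X]\) is a \(\mathbb Z_2\)-cycle, \(A_\#[X]\) is null-homologous in \(\mathbb R^k\), so its class in \(H_k(\mathbb R^k,\mathbb R^k\setminus\{y\};\mathbb Z_2)\) vanishes. Degenerate simplices simply drop out of that relative class, so only the point \(y\), not the map, has to be chosen generically.
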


\section{Proof of Theorem~\ref{theorem main}}
\label{section proof}
For a set \(X\subset\mathbb R^d\), we denote by
\(\Span X\), \(\aff X\), and \(\cone X\) its linear span,
affine hull, and conical hull, respectively. Put
\[
K_0:=V_1*\dots*V_{k+1} \quad\text{and}\quad K_1:=V_{k+2}*\dots *V_{d+1}.
\]
Recall that for any face \(\sigma\in K_0*K_1\), we use the convention \(A(\sigma):=A(|\sigma|^\circ)\), where \(|\sigma|^\circ\) is the relative interior of the simplex \(|\sigma|=\conv\{e_v:v\in \sigma\}\).

For simplicity, we first prove the theorem under the following general-position assumption on the colorful Carath\'eodory map \(A\): For every facet \(\sigma\in K_0*K_1\), the set \(A(\sigma)\) is an open full-dimensional simplex
in \(\mathbb R^d\), and the origin \(0\) does not lie on its boundary. The general case follows from the general-position case by a standard limiting argument. In particular, under this assumption, if \(0\in A(\sigma)\) for \(\sigma\in K_0*K_1\), then \(\sigma\) is a facet.
\smallskip

By the colorful Carath\'eodory theorem, there is a facet \(\sigma\in K_0*K_1\) such that \(0\in A(\sigma)\). If \(\sigma\in T*K_1\), there is nothing to prove. From now on, assume that \(\sigma \notin T*K_1\).

If \(k=d\), then by the definition of a spanning \(k\)-tree, the complex \(T\cup \{\sigma\}\) contains a \(d\)-sphere containing the facet \(\sigma\). By Lemma~\ref{cor:face-covered}, there exists a facet \(\sigma'\) of this \(d\)-sphere distinct from \(\sigma\) such that \(0\in A(\sigma')\), which completes the proof. From now on, we assume that $k<d$. 

The facet \(\sigma\) can be written uniquely as
\(\sigma=\omega\cup\tau\), where \(\omega\) and \(\tau\) are facets of \(K_0\) and \(K_1\), respectively, and \(\omega\notin T\). Put
\[
\mathcal D:=
\bigl\{\delta\text{ is a facet of } K_1:
        0\in A(\omega\cup\delta)\bigr\}.
\]
The set \(\mathcal D\) is nonempty, since it contains \(\tau\). Our next goal is to choose a maximal element of \(\mathcal D\) in a suitable sense.

For every \(\delta\in\mathcal D\), the origin \(0\) lies in \(A(\omega\cup \delta)\), and so the intersection
\[
(-\cone A(\omega))\cap  A(\delta)
\]
is nonempty; see Figure~\ref{fig:x-delta}. By the general-position assumption, it is a singleton; let \(x_\delta\) be its only point. Since \(x_\delta\) lies in \(A(\delta)\), it belongs to the boundary of \(A(\omega\cup \delta)\). Since \(0\in A(\omega\cup \delta)\) does not lie on this boundary, we conclude that \(x_\delta\ne 0\).

Since the dimension of \(\omega\) is \(k<d\), the origin \(0\) does not lie in the \(k\)-dimensional plane \(\aff A(\omega)\). Hence, there is a linear functional
\[
\ell:\Span A(\omega)\to\mathbb R
\quad\text{such that}\quad
\ell|_{\aff A(\omega)}=\mathrm{const}>0.
\]
Since, for every \(\delta\in \mathcal D\), the point \(x_\delta\ne 0\) lies in \(-\cone A(\omega)\subset \Span A(\omega)\), we have
\( \ell(x_\delta)<0\).

\begin{figure}[ht]
\centering
\begin{tikzpicture}[scale=1.2]

    \fill[gray!25]
    (2,1) -- (0,-1) -- (-2,0) -- cycle;

  \fill (0,0) circle (1.4pt) node[below] {$0$};

  \fill (2,1) circle (1.6pt) node[above right] {$A(\omega)$};

  \draw[thick] (0,0) -- (-3,-1.5) node[right] {$-\cone A(\omega)$};

  \draw[thick,dashed] (2,1) -- (0,0);

  \draw[thick,black!75] (0,-1) -- (-2,0) node[below] {$A(\delta)$};

  \fill (-1,-0.5) circle (1.4pt) node[above] {$x_\delta$};

\end{tikzpicture}
\caption{The point \(x_\delta\) is the unique point of
\((-\cone A(\omega))\cap A(\delta)\).}
\label{fig:x-delta}
\end{figure}

We now choose a maximal element of \(\mathcal D\) in the following sense. Replacing \(\tau\) by another element of \(\mathcal D\), if necessary, we may assume that
\begin{equation}
\label{eq:minimality}
\ell(x_\delta) \leq \ell(x_\tau)<0
        \qquad\text{for all }\delta\in\mathcal D.
\end{equation}

There is a unique linear functional \(h:\mathbb R^d\to\mathbb R\) such that
\[
        h|_{\Span A(\omega)}=\ell
        \qquad\text{and}\qquad
        h|_{\aff A(\tau)}=\mathrm{const}=\ell(x_\tau)<0.
\]
\begin{figure}[ht]
\centering
\begin{tikzpicture}[scale=1.2]

  \fill (0,0) circle (1.4pt) node[below] {$0$};

  \fill (2,1) circle (1.6pt) node[above] {$A(\omega)$};

  \draw[thick] (3,1.5) -- (-3,-1.5) node[below] {$\Span A(\omega)$};

    \draw[thick,black!60, dashed] (-3,0.5) -- (1,-1.5) node[below] {$h(x)=\ell(x_\tau)<0$};
  \draw[thick] (0,-1) -- (-2,0) node[below] {$A(\tau)$};
  \fill (-2,0) circle (1pt) node[above] {$u_{k+2}$};
  \fill (0,-1) circle (1pt) node[above] {$u_{d+1}$};

  \fill (-0.2,1) circle (1pt) node[above] {$v_{k+2}$};
  \fill (1,-0.3) circle (1pt) node[above] {$v_{d+1}$};

  \fill (-1,-0.5) circle (1.4pt) node[above] {$x_\tau$};

\end{tikzpicture}
\caption{Arrangement of the points \(u_{k+2},\dots, u_{d+1}\) and \(v_{k+2},\dots, v_{d+1}\).}
\label{fig:h}
\end{figure}

Put
\(
\tau=\{u_{k+2},\ldots,u_{d+1}\},\) where \(u_i\in V_i
\).
Since \(A(u_i)\in \aff A(\tau)\), we have \(h(A(u_i))=\ell(x_\tau)<0\).

Since the origin \(0\) lies in \(A(V_i)\) for every \(i=k+2,
\ldots,d+1\), there is a vertex \(v_i\in V_i\) such that
\(
        h(A(v_i))> \ell(x_\tau)
\); see Figure~\ref{fig:h}. Indeed, otherwise, the set \(A(V_i)\) lies in the closed half-space \(\{x: h(x)\leq \ell(x_\tau)\}\), which does not contain the origin \(0\) since \(\ell(x_\tau)<0\), a contradiction.

By Definition~\ref{definition k-tree}, there is a \(k\)-sphere \(S\subseteq T\cup \{\omega\}\) such that \(\omega\in S\). Since \(h(A(v_i))>\ell(x_\tau)=h(A(u_i))\) for all \(i\), the points \(u_i\) and \(v_i\) are distinct. Hence, the following join of spheres
\[
S*\{u_{k+2},v_{k+2}\}*\dots *\{u_{d+1},v_{d+1}\}
\]
is a \(d\)-sphere containing \(\sigma=\omega\cup \tau\) as a facet. By Lemma~\ref{cor:face-covered} and \(0\in A(\sigma)\), there is a facet \(\sigma'\ne \sigma\) of this sphere with \(0\in A(\sigma')\). Let \(\sigma'=\omega'\cup \tau'\), where \(\omega'\) and \(\tau'\) are facets of \(S\) and of \(\{u_{k+2},v_{k+2}\}*\dots *\{u_{d+1},v_{d+1}\}\), respectively.

If \(\omega'\ne\omega\), then \(\omega'\in T\), and so \(\sigma'=\omega'\cup \tau'\in T*K_1\) is a desired facet. Hence, we can assume that \(\omega'=\omega\). Since \(0\in A(\omega\cup \tau')\), we have \(\tau'\in \mathcal D\). Moreover, \(\tau'\) is distinct from \(\tau\), as \(\sigma'\) is distinct from \(\sigma\).

We claim that \(h(x_{\tau'})>h(x_\tau)\). This contradicts~\eqref{eq:minimality}: indeed, the linear functional \(h\) coincides with the linear functional \(\ell\) on \(\Span A(\omega)\), and the points \(x_\tau\) and \(x_{\tau'}\) lie in \(-\cone A(\omega)\subset \Span A(\omega)\). It remains to prove the claim.

Let \(\tau'=\{w_{k+2},\dots,w_{d+1}\}\), where \(w_i\in \{u_i,v_i\}\) for every \(i\). Since \(\tau'\ne \tau\), at least one of the vertices \(w_i\) coincides with \(v_i\). For all \(i\), we have
\[
        h(A(w_i))\ge h(A(u_i))=h(x_\tau),
\]
and for at least one \(i\) the inequality is strict. Recall that \(A(\tau')\) is the image of the relative interior of \(\conv\{e_v:v\in \tau'\}\). Hence the point \(x_{\tau'}\in A(\tau')\) is a convex combination of the points \(A(w_i)\) with all coefficients positive, and therefore
\(
        h(x_{\tau'})>h(x_\tau)
\), which completes the proof.

\section{Generalized spanning $k$-trees}
\label{generalized k-trees}

The notion of a spanning $k$-tree is not restricted to joins of color classes or wedges of spheres. It has a natural homological formulation for arbitrary finite simplicial complexes, and spanning trees in this sense have been studied, for example, in \cite{Matrix-tree}. However, to the best of our knowledge, the relation between affine maps and spanning $k$-trees has not been considered before. Before giving a formal definition and the corresponding result, we recall the necessary terminology and a few basic properties of the \(\mathbb Z_2\)-mapping degree. For background on the mapping degree, see Section~2.2 of~\cite{HatcherAT}. 

\subsection{Topological preliminaries}

Throughout this subsection, let \(X\) be an abstract \(k\)-dimensional simplicial complex.

All chain groups and homology groups below are taken with coefficients in
\(\mathbb Z_2\). For \(n\geq 0\), we write
\(C_n(X;\mathbb Z_2)\) and \(\widetilde C_n(X;\mathbb Z_2)\) for the ordinary
and reduced \(n\)th simplicial chain groups, respectively, and denote the
boundary operator by \(\partial\). A chain
\(c\in \widetilde C_n(X;\mathbb Z_2)\) is called a \textit{cycle} if
\(\partial c=0\).

If \(B\subseteq X\) is a subcomplex, we write
\(C_n(X,B;\mathbb Z_2)=C_n(X;\mathbb Z_2)/C_n(B;\mathbb Z_2)\) for the relative
\(n\)th chain group. A chain \(c\in C_n(X;\mathbb Z_2)\) is called a \textit{relative
cycle} in the pair \((X,B)\) if
\(\partial c\in C_{n-1}(B;\mathbb Z_2)\). Equivalently, the image of \(c\) in
\(C_n(X,B;\mathbb Z_2)\) is a cycle.

The corresponding ordinary and reduced homology groups are denoted by
\(H_n(X;\mathbb Z_2)\) and \(\widetilde H_n(X;\mathbb Z_2)\). If
\(B\subseteq X\) is a subcomplex, we write \(H_n(X,B;\mathbb Z_2)\) for the
ordinary relative homology group. For relative homology, ordinary and reduced homology agree, and we identify 
\(
\widetilde H_n(X,B;\mathbb Z_2)=H_n(X,B;\mathbb Z_2). 
\) 
This convention is also used when \(B=\varnothing\). Thus 
\[
\widetilde H_n(X,\varnothing;\mathbb Z_2) = H_n(X,\varnothing;\mathbb Z_2) = H_n(X;\mathbb Z_2). 
\] 
In particular, \( \widetilde H_0(X_0,\varnothing;\mathbb Z_2) = H_0(X_0;\mathbb Z_2) = \mathbb Z_2\), where $X_0$ is a singleton.

 Let \(A\colon |X|\to\mathbb R^k\) be an affine map. A point \(y\in\mathbb R^k\) is called a \emph{regular value} of \(A\) if every point of \(A^{-1}(y)\) lies in an open simplex \(|\sigma|^\circ\) for some \(k\)-face \(\sigma\in X\). For a regular value \(y\in\mathbb R^k\), the \(\mathbb{Z}_2\)-mapping degree of \(A\) at \(y\), denoted by \(\deg_2(A,y)\), is defined by
\[
\deg_2(A,y):=\#\{\sigma \in X: y \in A(\sigma)\} \pmod{2}.
\]
This degree has the following homological interpretation. Let \([X] \in \widetilde C_k(X;\mathbb Z_2) \) be the sum of all \(k\)-faces of \(X^{}\). Since \(y\) is regular, the chain \(A_\#([X])\) is a relative cycle in the pair \((\mathbb R^k,\mathbb R^k\setminus\{y\})\). It is well-known that its relative homology class lies in \(H_k(\mathbb{R}^k, \mathbb{R}^k \setminus \{y\}; \mathbb{Z}_2) \cong \mathbb{Z}_2\) and is equal to \(\deg_2(A,y)\). 

\begin{lemma}
\label{degree}
If the chain \([X]\in\widetilde C_k(X;\mathbb Z_2)\) is a cycle, then for any affine map \(A\colon |X|\to\mathbb R^k\) and any regular value \(y\in\mathbb R^k\), we have \(\deg_2(A,y)=0\).
\end{lemma}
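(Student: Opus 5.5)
We will use the homological interpretation of the degree recalled just before Lemma~\ref{degree}. Fix an affine map \(A\colon|X|\to\mathbb R^k\) and a regular value \(y\). By that interpretation, \(\deg_2(A,y)\) is the class of the chain \(A_\#([X])\) in \(H_k(\mathbb R^k,\mathbb R^k\setminus\{y\};\mathbb Z_2)\cong\mathbb Z_2\). So it suffices to show that this class is zero whenever \([X]\) is a cycle. The natural route is the long exact sequence of the pair \((\mathbb R^k,\mathbb R^k\setminus\{y\})\). The relative class comes from an absolute class whenever the chain is an honest cycle. In our case \(\partial A_\#([X])=A_\#(\partial[X])=0\), since \(A_\#\) is a chain map. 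Hence the relative class is the image of the class of the cycle \(A_\#([X])\) in \(\widetilde H_k(\mathbb R^k;\mathbb Z_2)=0\) under
\[
\widetilde H_k(\mathbb R^k;\mathbb Z_2)\to H_k(\mathbb R^k,\mathbb R^k\setminus\{y\};\mathbb Z_2),
\]
and is therefore zero.

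The technical point is which chain complex \(A_\#\) lands in. I would work with singular chains: each \(k\)-face \(\sigma\) with a chosen vertex ordering gives an affine singular simplex \(A|_{|\sigma|}\). Over \(\mathbb Z_2\), the boundary of the sum of these singular simplices is the sum of the affine singular \((k-1)\)-simplices on the \((k-1)\)-faces, each counted with multiplicity. There is a subtlety here. Different orderings of a shared \((k-1)\)-face give different singular simplices. To avoid this, I would fix a total order on the vertices of \(X\) once and for all and use the induced orderings on all faces, so that \(A_\#\) becomes an honest chain map from the ordered simplicial chain complex of \(X\) to the singular chains of \(\mathbb R^k\). I expect this bookkeeping, together with checking that it matches the identification of the relative class with \(\deg_2(A,y)\) stated in the paper, to be the only real obstacle.

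As a sanity check, and an alternative if one prefers to avoid singular homology, there is an elementary argument. Take a generic ray \(r\) from \(y\) to infinity that avoids the images of all \((k-2)\)-faces and meets images of \((k-1)\)-faces transversally. Move a point \(z\) from \(y\) along \(r\) to a point far away, outside \(A(|X|)\), where the count \(\#\{\sigma: z\in A(\sigma)\}\) is \(0\). The count changes only when \(z\) crosses the image of a \((k-1)\)-face \(\rho\). At such a crossing, the parity changes by the number of \(k\)-faces containing \(\rho\) whose images are non-degenerate and on a given side. Strictly speaking, it changes by the sum over the \(k\)-faces containing \(\rho\), each counted on the side where its image lies. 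Since the number of \(k\)-faces containing \(\rho\) is even, because \(\partial[X]=0\), faces on the two sides contribute equally mod \(2\) to the change. Degenerate images need genericity care, so I regard this as a backup; the homological argument above is the one I would write.
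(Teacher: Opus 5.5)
Your main argument is correct and is the paper's proof: since $\partial[X]=0$, the chain $A_\#([X])$ is an absolute cycle whose class in $\widetilde H_k(\mathbb R^k;\mathbb Z_2)=0$ vanishes. Hence its image in $H_k(\mathbb R^k,\mathbb R^k\setminus\{y\};\mathbb Z_2)$, which is $\deg_2(A,y)$, is zero. Fixing a global vertex order so that $A_\#$ is a chain map from ordered simplicial chains to singular chains is a correct way to fill in a detail the paper's sketch leaves implicit, and the ray-crossing backup is not needed.
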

\begin{proof}[Sketch of the proof] Since \(\partial [X]=0\), the chain \([X]\) defines a homology class in \(\widetilde H_k(X;\mathbb Z_2)\). Its image in \(\widetilde H_k(\mathbb R^k;\mathbb Z_2)\) is zero, because \(\mathbb R^k\) is contractible. Hence the corresponding relative class in \(H_k(\mathbb R^k,\mathbb R^k\setminus\{y\};\mathbb Z_2) \) vanishes, which implies \(\deg_2(A,y)=0\). \end{proof}

\begin{corollary}
\label{degree-cor}
Under the assumptions of Lemma~\ref{degree}, for any \(k\)-face \(\sigma\in X^{}\), we have
\(
A(\sigma) \subseteq A(X \setminus \{\sigma\}).
\)
\end{corollary}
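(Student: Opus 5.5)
We need to prove Corollary~\ref{degree-cor}: if \([X]\) is a cycle and \(\sigma\) is a \(k\)-face, then every point \(y\in A(\sigma)\) lies in \(A(\tau)\) for some face \(\tau\neq\sigma\) of \(X\).

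The plan is to argue first at regular values and then pass to arbitrary points of \(A(\sigma)\) by a perturbation and compactness argument.

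\emph{Step 1: regular values.} Fix \(y\in A(\sigma)\) that is a regular value of \(A\). Then \(\sigma\) is counted in \(\#\{\sigma'\in X: y\in A(\sigma')\}\). Lemma~\ref{degree} says this number is even. Hence there is another \(k\)-face \(\sigma'\neq\sigma\) with \(y\in A(\sigma')\subseteq A(X\setminus\{\sigma\})\).

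\emph{Step 2: degenerate images.} If \(A(\sigma)\) is not full-dimensional, i.e.\ \(A|_{|\sigma|}\) is not injective, we handle it directly. Any \(y\in A(\sigma)\) has a preimage in \(|\sigma|^\circ\), and the fiber of \(A|_{|\sigma|}\) over \(y\) is a positive-dimensional affine slice of the simplex. Such a slice meets the relative boundary of \(|\sigma|\). So \(y\in A(\rho)\) for some proper face \(\rho\subsetneq\sigma\), and \(\rho\in X\setminus\{\sigma\}\).

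\emph{Step 3: approximation in the nondegenerate case.} Assume \(A(\sigma)\) is an open \(k\)-simplex and let \(y\in A(\sigma)\) be arbitrary. The non-regular values lie in the union of the sets \(A(\rho)\) with \(\dim\rho<k\), together with the images of degenerate \(k\)-faces. This is a finite union of sets contained in affine subspaces of dimension \(<k\), so it is nowhere dense. Choose regular values \(y_n\in A(\sigma)\) with \(y_n\to y\); this is possible because \(A(\sigma)\) is open in \(\mathbb R^k\). By Step 1, there are \(k\)-faces \(\sigma_n\neq\sigma\) with \(y_n\in A(\sigma_n)\). Passing to a subsequence, we may take \(\sigma_n=\sigma'\) constant, since \(X\) is finite.

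\emph{Step 4: closing the argument.} From Step 3, \(y\in\overline{A(\sigma')}=A(|\sigma'|)\), so \(y\in A(\rho)\) for some face \(\rho\subseteq\sigma'\). The main obstacle is that \(\rho\) might be a common face of \(\sigma\) and \(\sigma'\). Every such face is \(\neq\sigma\), because it lies in \(\sigma'\neq\sigma\) and \(\sigma\) is a \(k\)-face. Therefore \(\rho\in X\setminus\{\sigma\}\) in all cases, and \(y\in A(X\setminus\{\sigma\})\). This proves Corollary~\ref{degree-cor}.

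The genuinely delicate point is Step 3: the density of regular values inside \(A(\sigma)\). It follows because the excluded set is a finite union of images of simplices of dimension \(<k\) and of degenerate \(k\)-simplices, each lying in a proper affine subspace. So \(A(\sigma)\) minus this set is dense in \(A(\sigma)\).
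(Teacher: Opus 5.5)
Your proof is correct, but it is organized differently from the paper's.

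The paper argues by contradiction. Suppose some point $y_0\in A(\sigma)$ were not in $A(X\setminus\{\sigma\})$. Then $A(\sigma)$ is a full-dimensional open simplex, which is your Step~2 in contrapositive form. A point of $A(\sigma)$ covered by no other face is a regular value with $\deg_2(A,y_0)=1$, contradicting Lemma~\ref{degree}.

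No approximation is actually needed there. The open simplices $|\tau|^\circ$ partition $|X|$, so the whole fiber $A^{-1}(y_0)$ lies in the open $k$-simplex $|\sigma|^\circ$, and $y_0$ itself is regular in the paper's sense. Your Steps~3 and~4 are precisely the work this contrapositive avoids: density of regular values in the open set $A(\sigma)$, extraction of a constant subsequence $\sigma_n=\sigma'$, and passage to the closure $\overline{A(\sigma')}=A(|\sigma'|)$.

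What your direct route buys is robustness. It only uses that regular values are dense in $A(\sigma)$, so it would survive unchanged under a more restrictive notion of regular value, such as a genericity condition of the kind often imposed when defining degree. The paper's shortcut, by contrast, depends on regularity meaning exactly ``the fiber avoids the $(k-1)$-skeleton.''

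Your individual steps all check out. This includes the point in Step~4 that no face of a $k$-face $\sigma'\neq\sigma$ can equal $\sigma$, since $X$ is $k$-dimensional.
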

\begin{proof}[Sketch of the proof]
Suppose the contrary. Since \(A(\sigma)\) is not covered by the image of the boundary of $\sigma$, the set \(A(\sigma)\) is a full dimensional open simplex, and one can choose a point in \(A(\sigma)\) such that it is a regular value and is covered only by $\sigma$. Therefore, we have \(\deg_2(A,y)=1\), which contradicts Lemma~\ref{degree}.
\end{proof}

\subsection{Homological spanning \(k\)-trees}

We now give the homological definition of a spanning \(k\)-tree in an arbitrary finite simplicial complex.

\begin{definition}
\label{SpanningTree}
Let \(X\) be a finite \(k\)-dimensional simplicial complex. A subcomplex
\(T\subseteq X\) is called a \(\mathbb Z_2\)-spanning \(k\)-tree if \(T\)
contains the full \((k-1)\)-skeleton of \(X\) and satisfies:
\begin{enumerate}
    \item \(\widetilde H_k(T;\mathbb Z_2)=0\);
    \item $T$ is maximal with respect to (1), that is,
    \(
    \widetilde H_k(T\cup\{\sigma\};\mathbb Z_2)\ne 0
    \)
    for every facet \(\sigma\) of \(X\) that does not belong to \(T\). 
\end{enumerate}
\end{definition}

The following lemma follows directly from Corollary~\ref{degree-cor} and from the definition of the $\mathbb{Z}_2$-spanning $k$-tree.

\begin{lemma}
\label{Z_2k-tree}    
Let \(X\) be a finite \(k\)-dimensional simplicial complex, and let \(T\subseteq X\) be a \(\mathbb Z_2\)-spanning \(k\)-tree. Then, for every affine map \(A:|X|\to\mathbb R^k\), we have \(A(T)=A(X)\).
\end{lemma}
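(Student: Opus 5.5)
The inclusion \(A(T)\subseteq A(X)\) is trivial, so it suffices to show \(A(X)\subseteq A(T)\). Since \(A(X)\) is the union of the sets \(A(\sigma)\) over faces \(\sigma\in X\), and every face of \(X\) of dimension at most \(k-1\) already lies in \(T\), it is enough to check that \(A(\sigma)\subseteq A(T)\) for every \(k\)-face \(\sigma\in X\setminus T\). Such a \(\sigma\) is a facet of \(X\), because \(X\) is \(k\)-dimensional. By condition (2) of Definition~\ref{SpanningTree}, we then have \(\widetilde H_k(T\cup\{\sigma\};\mathbb Z_2)\ne 0\).

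The next step is to extract from this a cycle that uses \(\sigma\). Because \(T\cup\{\sigma\}\) is at most \(k\)-dimensional, there are no \((k+1)\)-chains, so \(\widetilde H_k(T\cup\{\sigma\};\mathbb Z_2)\) equals the group of \(k\)-cycles. Take a nonzero \(k\)-cycle \(c\). It must contain \(\sigma\) with coefficient \(1\): otherwise \(c\) would be a nonzero cycle in \(T\), contradicting \(\widetilde H_k(T;\mathbb Z_2)=0\). Over \(\mathbb Z_2\), the cycle \(c\) is just a set of \(k\)-faces. Let \(Y\) be the subcomplex generated by these faces; it is \(k\)-dimensional, and \([Y]=c\) is a cycle. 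Every \(k\)-face of \(Y\) other than \(\sigma\) lies in \(T\), and all lower-dimensional faces of \(Y\) lie in \(T\) because \(T\) contains the full \((k-1)\)-skeleton. Hence \(Y\setminus\{\sigma\}\subseteq T\).

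Now restrict \(A\) to \(|Y|\); this is still an affine map into \(\mathbb R^k\). Corollary~\ref{degree-cor}, applied to \(Y\) and its \(k\)-face \(\sigma\), gives \(A(\sigma)\subseteq A(Y\setminus\{\sigma\})\subseteq A(T)\), which finishes the argument.

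The only delicate point is confirming that Corollary~\ref{degree-cor} applies to \(Y\) with the stated hypotheses. That corollary is formulated for a \(k\)-dimensional complex whose fundamental chain is a cycle, and this holds for \(Y\) because \(Y\) is generated by the \(k\)-faces of \(c\), so \([Y]=c\). One should also check that its proof handles the degenerate case where \(A(\sigma)\) is not full-dimensional. In that case \(A(\sigma)\) lies in the image of the boundary of \(\sigma\), since an affine image of the relative interior of a simplex that is not full-dimensional is covered by the images of its proper faces. Those proper faces lie in the \((k-1)\)-skeleton, hence in \(T\), so the inclusion \(A(\sigma)\subseteq A(T)\) holds directly. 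The corollary's proof by contradiction covers exactly this situation, so no additional work is needed.
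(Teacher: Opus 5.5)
Your proposal is correct and takes essentially the paper's route. The paper only says the lemma follows directly from Corollary~\ref{degree-cor} and the definition of a \(\mathbb Z_2\)-spanning \(k\)-tree, and you fill this in exactly as intended: take a nonzero \(k\)-cycle in \(T\cup\{\sigma\}\), note it must contain \(\sigma\), and apply the corollary to its support, just as the paper does with the support \(C\) in the sketch of Theorem~\ref{general theorem main}.
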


Throughout the rest of this subsection, we use the notation and conventions from the introduction.

 Theorem~\ref{theorem main} remains valid if the spanning \(k\)-tree \(T\) is replaced by a \(\mathbb Z_2\)-spanning \(k\)-tree. We state this more general version explicitly.

\begin{theorem}
\label{general theorem main}
Let \(T\subseteq V_1*\cdots *V_{k+1}\) be a \(\mathbb Z_2\)-spanning \(k\)-tree for some \(0\leq k\leq d\). If \(A:|\Delta_V|\to\mathbb R^d\) is a colorful Carath\'eodory map, then there exists a face \( \sigma\in T*V_{k+2}*\cdots *V_{d+1} \) such that \(0\in A(\sigma)\). \end{theorem}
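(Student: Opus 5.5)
We follow the proof of Theorem~\ref{theorem main} almost verbatim, replacing the $k$-sphere $S$ by a $\mathbb Z_2$-cycle and Lemma~\ref{cor:face-covered} by Corollary~\ref{degree-cor}. We first assume the same general-position hypothesis on $A$; the general case then follows by the same limiting argument. By Theorem~\ref{theorem colorful caratheodory}, there is a facet $\sigma=\omega\cup\tau$ of $K_0*K_1$ with $0\in A(\sigma)$, where $\omega$ is a facet of $K_0$ and $\tau$ a facet of $K_1$. If $\omega\in T$ we are done, so we assume $\omega\notin T$.

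The key new input is a substitute for the sphere $S$. Since $\widetilde H_k(T;\mathbb Z_2)=0$ but $\widetilde H_k(T\cup\{\omega\};\mathbb Z_2)\ne0$, and both complexes are $k$-dimensional with no $(k+1)$-faces, homology in degree $k$ equals the space of $k$-cycles. Hence there is a nonzero cycle $c\in\widetilde C_k(T\cup\{\omega\};\mathbb Z_2)$, and $c$ must contain $\omega$, since otherwise $c$ would be a nonzero cycle in $T$. Let $Z$ be the subcomplex generated by the $k$-faces in the support of $c$. Then $[Z]=c$ is a cycle, $\omega\in Z$, and every other $k$-face of $Z$ lies in $T$. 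For $k=d$, Corollary~\ref{degree-cor} applied to $Z$ and $\omega$ gives a $d$-face $\omega'\ne\omega$ of $Z$ with $0\in A(\omega')$, and $\omega'\in T$.

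For $k<d$, we define $\mathcal D$, $x_\delta$, $\ell$, $h$, the maximizing choice of $\tau$ via~\eqref{eq:minimality}, and the vertices $u_i,v_i$ exactly as before. We then replace the sphere join by the complex
\[
Y:=Z*\{u_{k+2},v_{k+2}\}*\dots*\{u_{d+1},v_{d+1}\}.
\]
This is $d$-dimensional, and the sum $[Y]$ of its $d$-faces is the join of the cycle $[Z]$ with the $0$-dimensional cycles $u_i+v_i$. By the Leibniz rule for joins over $\mathbb Z_2$, this is a cycle. Concretely, the boundary of $\alpha\cup\beta$ splits into terms from $\partial\alpha$ and terms from $\partial\beta$, and each part sums to zero because each factor is a cycle in reduced chains.

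The main point to check carefully is this cycle property, especially the reduced-chain convention for $0$-cycles. When $k=0$, $Z$ consists of two vertices, as in the case of cycles in $V_1$, and the convention makes the join formula work. Given $[Y]$ is a cycle, Corollary~\ref{degree-cor} applied to $A|_{|Y|}$ and the facet $\sigma$ gives a facet $\sigma'=\omega'\cup\tau'\ne\sigma$ of $Y$ with $0\in A(\sigma')$.

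We finish as in Section~\ref{section proof}. If $\omega'\ne\omega$, then $\omega'\in T$ and we are done. Otherwise $\tau'\in\mathcal D$ with $\tau'\ne\tau$, and the strict convex-combination argument gives $h(x_{\tau'})>h(x_\tau)$, which contradicts~\eqref{eq:minimality}.
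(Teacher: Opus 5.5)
Your proposal is correct and is essentially the paper's own argument. The paper's sketch likewise replaces the $k$-sphere $S$ by the support of a $\mathbb Z_2$-cycle in $T\cup\{\omega\}$, notes that its join with the $0$-cycles $u_i+v_i$ is a $d$-cycle, and reruns the argument of Theorem~\ref{theorem main} with Corollary~\ref{degree-cor} in place of Lemma~\ref{cor:face-covered}; you additionally fill in details the sketch leaves implicit (why the cycle must contain $\omega$, the reduced-chain Leibniz rule for joins, and the $k=0$ case).
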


\begin{proof}[Sketch of proof] 
The proof is the same as that of Theorem~\ref{theorem main}. The only change is that the \(k\)-sphere \(S\) used there is replaced by the support \(C\) of the relevant cycle in \(T\cup\{\omega\}\). Since the join of cycles is again a cycle, the complex \[ C*\{u_{k+2},v_{k+2}\}*\cdots *\{u_{d+1},v_{d+1}\} \] induces a \(d\)-dimensional cycle. The geometric argument remains the same. \end{proof}

\section{Discussion}
\label{section discussion}
In this section we discuss the difference between Theorems \ref{theorem main} and \ref{general theorem main}. Then we explore applications of Theorem \ref{general theorem main} and, in particular,  Lemma~\ref{Z_2k-tree}, to some other Caratheodory type theorems.

\subsection{Spanning $k$-tree vs $\mathbb{Z}_2$-spanning $k$-tree}

The homological notion of a \(\mathbb Z_2\)-spanning \(k\)-tree is genuinely broader than the spherical notion used in Theorem~\ref{theorem main}. Already in dimension \(2\), the standard triangulation of the torus, with vertices partitioned as \(V_1\sqcup V_2\sqcup V_3\) as in Figure~\ref{fig:k333-torus}, gives a subcomplex of \(V_1*V_2*V_3\). Its fundamental cycle is not supported on a triangulated sphere, so the homological version allows \(k\)-trees that are not covered by the spherical definition used in Theorem~\ref{theorem main}. 



\begin{figure}[ht]
\centering
\begin{tikzpicture}[
    scale=1.1,
    vertex/.style={circle, draw=black, inner sep=0pt, minimum size=5.5pt},
    vone/.style={vertex, fill=blue!70},
    vtwo/.style={vertex, fill=green!65!black},
    vthree/.style={vertex, fill=red!75}
]

\draw[thick] (0,0) rectangle (3,3);

\foreach \x in {1,2} {
    \draw[thick] (\x,0) -- (\x,3);
}
\foreach \y in {1,2} {
    \draw[thick] (0,\y) -- (3,\y);
}

\foreach \x in {0,1,2} {
    \foreach \y in {0,1,2} {
        \draw[thick] (\x,\y) -- (\x+1,\y+1);
    }
}


\node[vone]   at (0,0) {};
\node[vtwo]   at (1,0) {};
\node[vthree] at (2,0) {};
\node[vone]   at (3,0) {};

\node[vtwo]   at (0,1) {};
\node[vthree] at (1,1) {};
\node[vone]   at (2,1) {};
\node[vtwo]   at (3,1) {};

\node[vthree] at (0,2) {};
\node[vone]   at (1,2) {};
\node[vtwo]   at (2,2) {};
\node[vthree] at (3,2) {};

\node[vone]   at (0,3) {};
\node[vtwo]   at (1,3) {};
\node[vthree] at (2,3) {};
\node[vone]   at (3,3) {};

\node[below] at (1.5,-0.15) {\small bottom};
\node[above] at (1.5,3.15) {\small top};
\node[left]  at (-0.15,1.5) {\small left};
\node[right] at (3.15,1.5) {\small right};

\end{tikzpicture}
\caption{Torus triangulation. Opposite sides are identified, and the vertex colors represent the three parts \(V_1,V_2,V_3\).}
\label{fig:k333-torus}
\end{figure}

Theorem \ref{general theorem main}, and especially Lemma~\ref{Z_2k-tree}, allows us to refine some well-known generalizations of the Carath\'eodory theorem. 

\subsection{Very colorful Carath\'eodory theorem}
Recall that the very colorful Carath\'eodory theorem~\cite{HolmsenPachTverberg2008,ArochaBaranyBrachoFabilaMontejano2009} asserts that, for \(d+1\) non-empty finite sets \(V_1,\ldots,V_{d+1}\subset\mathbb R^d\), if the origin lies in the convex hull of \(V_i\cup V_j\) for every pair \(i<j\), then there are points \(v_i\in V_i\) such that \(0\in\conv\{v_1,\ldots,v_{d+1}\}\). Lemma~\ref{Z_2k-tree} gives the following refinement.


\begin{theorem}
\label{thm:very-colorful-tree}
Let \(V_1,\ldots,V_{d+1}\) be pairwise disjoint non-empty finite sets, put \(V=V_1\sqcup\cdots\sqcup V_{d+1}\), and let \( T\subseteq V_1*\cdots*V_{d+1} \)
be a \(\mathbb Z_2\)-spanning \(d\)-tree. Let \(A:|\Delta_V|\to\mathbb R^d\) be an affine map such that \( 0\in A(V_i\cup V_j) \)
for every pair \(1\leq i<j\leq d+1\). Then there exists a face \(\sigma\in T\) such that \(0\in A(\sigma)\).
\end{theorem}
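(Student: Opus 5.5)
The plan is to combine the very colorful Carathéodory theorem with Lemma~\ref{Z_2k-tree}, applied to the $d$-dimensional complex $X:=V_1*\cdots*V_{d+1}$. Its $d$-faces are exactly the colorful facets. By definition, $T$ is a $\mathbb Z_2$-spanning $d$-tree of this complex.

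\textbf{Step 1.} Find a colorful facet whose image contains the origin. Under the hypothesis $0\in A(V_i\cup V_j)$ for all $i<j$, the very colorful Carathéodory theorem~\cite{HolmsenPachTverberg2008,ArochaBaranyBrachoFabilaMontejano2009} gives a face $\rho\in X$ with $0\in\conv A(\rho)$. Here $A(V_i\cup V_j)$ is read, as in the paper's conventions, as the image of the simplex on $V_i\cup V_j$, i.e. $0\in\conv A(V_i\cup V_j)$. Passing to the minimal face of $\rho$ whose relative interior contains $0$, we obtain $\rho'\in X$ with $0\in A(\rho')$ in the open-simplex sense. Note that $\rho'$ need not be a facet.

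\textbf{Step 2.} Transfer to $T$. The image $A(X)$ is the union of $A(\sigma)$ over all faces $\sigma\in X$, so $0\in A(X)$. Restrict $A$ to $|X|$; this is an affine map $|X|\to\mathbb R^d$. Lemma~\ref{Z_2k-tree} then gives $A(T)=A(X)$, hence $0\in A(\sigma)$ for some $\sigma\in T$, which is the claim.

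\textbf{Main obstacle.} The only point needing care is the justification behind Lemma~\ref{Z_2k-tree}, since the statement is otherwise immediate from it. That lemma rests on Corollary~\ref{degree-cor}: for a facet $\sigma\notin T$, the complex $T\cup\{\sigma\}$ carries a nonzero $d$-cycle, whose support must contain $\sigma$ because $\widetilde H_d(T)=0$. Since there are no $(d+1)$-faces, this cycle is a nonzero homology class. Applying Corollary~\ref{degree-cor} to the support $C$ gives $A(\sigma)\subseteq A(C\setminus\{\sigma\})\subseteq A(T)$.

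Lower-dimensional faces of $X$ already lie in $T$, because $T$ contains the full $(d-1)$-skeleton. One subtlety is degenerate affine maps, where Corollary~\ref{degree-cor} is stated via regular values. If needed, I would handle this by a limiting argument: perturb $A$ to general position, find faces $\sigma_n\in T$ with $0\in A_n(\sigma_n)$, pass to a constant subsequence $\sigma_n=\sigma$, and conclude that $0$ lies in the closure $\overline{A(\sigma)}=\conv A(\sigma)$. That puts $0$ in $A(\sigma')$ for some face $\sigma'\subseteq\sigma$, and $\sigma'\in T$ since $T$ is a complex.
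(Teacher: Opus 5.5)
Your proposal is correct and is exactly the paper's argument: the very colorful Carath\'eodory theorem gives $0\in A(V_1*\cdots*V_{d+1})$, and Lemma~\ref{Z_2k-tree} then gives $0\in A(T)$. The perturbation remark is unnecessary, since Corollary~\ref{degree-cor} (and hence Lemma~\ref{Z_2k-tree}) already holds for every affine map; as written, it would also need the perturbed maps $A_n$ to keep $0$ in $A_n(V_1*\cdots*V_{d+1})$, for example by re-centering them.
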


Indeed, the classical very colorful Carath\'eodory theorem gives
\(0\in A(V_1*\cdots*V_{d+1})\), and Lemma~\ref{Z_2k-tree} then implies \(0\in A(T)\).

However, Theorem~\ref{thm:very-colorful-tree} does not extend in this form to spanning \(k\)-trees with \(k<d\). Already the case \(k=d-1\) fails. For simplicity, we describe the example by identifying each vertex with its image under the affine map. Let \(d\ge 2\) and let \(e_1,\ldots,e_d\) be the standard basis of \(\mathbb R^d\). Put \(V_i=\{e_i,-e_i\}\) for \(i=1,\ldots,d\), and let \(V_{d+1}=\{w\}\), where \(w=\varepsilon(e_1+\cdots+e_d)\) for sufficiently small \(\varepsilon>0\). 

Let \(\omega_-=\{-e_1,\ldots,-e_d\}\), and let \(T\subset V_1*\cdots*V_d\) be obtained by deleting the facet \(\omega_-\) and keeping all its proper faces. Then \(T\) is a \(\mathbb Z_2\)-spanning \((d-1)\)-tree in \(V_1*\cdots*V_d\). It is straightforward to check that the pair-color assumptions are satisfied, while no face \(\sigma\in T*V_{d+1}\) satisfies \(0\in A(\sigma)\).

\subsection{Matroidal variation}

We also record a matroidal refinement of the colorful Carath\'eodory theorem of Kalai and Meshulam~\cite{KalaiMeshulam}.

Recall that if \(K\) is a simplicial complex with vertex set \(V\) and
\(W\subseteq V\), then \(K[W]\) denotes the induced subcomplex consisting of all faces of \(K\) contained in \(W\). A simplicial complex is called \emph{pure} if all its facets have the same dimension. We say that \(M\subset \Delta_V\) is \emph{matroidal} if \(M[W]\) is pure for every \(W\subseteq V\). Its rank function is \(\rho(W)=\dim M[W]+1,\) with the convention \(\dim\{\varnothing\}=-1\). When \(M\) has rank \(d+1\), we call an inclusion-minimal subset \(S\subseteq V\) satisfying \(\rho(V\setminus S)\le d\) a \emph{cocircuit} of \(M\).

\begin{theorem}
Let \(M\subset \Delta_V\) be a matroidal complex of rank \(d+1\), with rank
function \(\rho\), and let \(T\subset M\) be a \(\mathbb Z_2\)-spanning \(d\)-tree. Let \(A\colon |\Delta_V|\to\mathbb R^d\) be an affine map such that \( 0\in A(S) \) for every cocircuit \(S\) of \(M\). Then there exists a face
\(\sigma\in T\) such that \(0\in A(\sigma)\).
\end{theorem}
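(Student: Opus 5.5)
The plan has two steps, in the same pattern as Theorem~\ref{thm:very-colorful-tree}. First, we invoke the matroidal colorful Carath\'eodory theorem of Kalai and Meshulam~\cite{KalaiMeshulam}. Under the hypothesis that \(0\in A(S)\), equivalently \(0\in\conv A(S)\), for every cocircuit \(S\) of \(M\), it produces a face \(\tau\in M\) with \(0\in\conv A(\tau)\). Second, we pass from \(M\) to \(T\) using Lemma~\ref{Z_2k-tree}.

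In the first step, the face \(\tau\) gives \(0\in A(\tau')\) for some face \(\tau'\subseteq\tau\). With the paper's convention that \(A(\sigma)\) is the image of the open simplex, \(\tau'\) is the support of the convex combination. Since \(M\) is a simplicial complex, \(\tau'\in M\). Hence \(0\in A(M)\).

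In the second step, we need \(M\) to be a finite \(d\)-dimensional complex so that Lemma~\ref{Z_2k-tree} applies with \(k=d\). This holds because \(M\) has rank \(d+1\): indeed \(\rho(V)=\dim M+1=d+1\). The definition of a \(\mathbb Z_2\)-spanning \(d\)-tree requires \(T\subseteq M\) to contain the full \((d-1)\)-skeleton and to be homologically maximal. Lemma~\ref{Z_2k-tree} therefore gives \(A(T)=A(M)\) for the restriction \(A|_{|M|}\colon |M|\to\mathbb R^d\). Consequently \(0\in A(T)\), so there is a face \(\sigma\in T\) with \(0\in A(\sigma)\).

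The main obstacle is matching the hypotheses of Kalai--Meshulam with this paper's definitions. Their statement is phrased either as \(0\in\conv A(S)\) for every set \(S\) with \(\rho(V\setminus S)<\rho(V)\), or as a condition on complements of flats. Since the condition is monotone under enlarging \(S\), it suffices to check it on inclusion-minimal such \(S\), which are exactly the cocircuits defined here. I would also check that ``matroidal'' as defined here (every induced subcomplex is pure) coincides with the independence complex of a matroid, which is a standard characterization. Everything else is a direct application of Lemma~\ref{Z_2k-tree}, whose own content is Corollary~\ref{degree-cor} applied to a cycle in \(T\cup\{\sigma\}\) for each facet \(\sigma\notin T\).
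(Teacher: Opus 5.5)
Your proposal is correct and follows the paper's argument exactly: the Kalai--Meshulam matroidal colorful Carath\'eodory theorem yields \(0\in A(M)\), and Lemma~\ref{Z_2k-tree}, applied to the \(d\)-dimensional complex \(M\), gives \(A(T)=A(M)\). Your extra checks fill in details the paper leaves implicit: that the cocircuits are the inclusion-minimal sets in the Kalai--Meshulam hypothesis, and that \(\dim M=d\) because \(\rho(V)=d+1\).
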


Indeed, the matroidal colorful Carath\'eodory theorem of Kalai and Meshulam~\cite{KalaiMeshulam} gives \(0\in A(M)\), and by Lemma~\ref{Z_2k-tree} there exists a face \(\sigma\in T\) such that \(0\in A(\sigma)\).







\subsection{The Tverberg theorem}

We finally mention one further consequence, without formulating it as a separate theorem. Recall that the affine Tverberg theorem~\cite{Tverberg1966} asserts that every affine map from a simplex of dimension \((r-1)(d+1)\) to \(\mathbb R^d\) maps \(r\) pairwise disjoint faces to sets with a common point. In Corollary~1.7 of~\cite{Blag}, Blagojevi\'c proved a constrained version of this statement based on Theorem~\ref{theorem blagojevic k=1}. The same argument gives the corresponding statement with Theorem~\ref{theorem main} in place of Theorem~\ref{theorem blagojevic k=1}; moreover, the same extension works with the more general Theorem~\ref{general theorem main}. Since no new ideas are involved, we only record this observation and refer the reader to Blagojevi\'c's paper for the original formulation.


\section*{Acknowledgments}
We thank Roman Karasev for helpful discussions and comments, including the suggestion to use the homological definition of a spanning tree. We thank Andreas Holmsen for suggesting that we write the appendix. We are also grateful to Alexander Golovanov and Andrey Ryabichev for reading an earlier draft of the paper and for their remarks, which helped improve the exposition. We are also grateful to the participants of the MIPT seminar ``Combinatorics and Topology'' for helpful discussions.

\section*{Statements and Declarations}

\subsection*{Funding}
The research of M.B. is supported by MSHE RF GZ project. The research of A.P. is partially supported by NSF Grant DMS 2349045. 

\subsection*{Competing Interests}
The authors have no competing interests to declare that are relevant
to the content of this article.

\subsection*{Data Availability}
No datasets were generated or analysed during the current study.

\subsection*{Use of Generative AI}
The authors used generative AI tools to assist in verifying the proofs and improving the exposition and clarity. Definition~\ref{definition:meshulam-admissible} and the proof of Theorem~\ref{theorem:meshulam-admissible} from the appendix were suggested by OpenAI Codex using the GPT-5.5 model.  The authors independently verified the AI-assisted material and take full responsibility for the content of the manuscript.

\printbibliography

\appendix
\section{An alternative proof via Meshulam's lemma}
\label{appendix meshulam proof}

In this appendix, we provide an alternative proof of Theorem~\ref{general theorem main} using Meshulam's lemma.

\subsection{Preliminaries}
\label{appendix preliminaries}

We use the notation and conventions from the main body of the paper. For a
positive integer \(m\), write \([m]=\{1,\ldots,m\}\). If \(K\) is a
simplicial complex on a finite ground set \(V\), its \emph{Alexander dual}
(with respect to \(V\)) is
\[
K^*:=\{F\subseteq V:V\setminus F\notin K\}.
\]
When forming the join of two simplicial complexes that are
both defined on \(V\), we regard them as complexes on two disjoint labeled copies \(V^{(1)}\) and \(V^{(2)}\), respectively.
For \(U\subseteq V\), the \emph{induced subcomplex} of \(K\) on \(U\) is
\(K[U]:=\{\sigma\in K:\sigma\subseteq U\}\).

All homology groups in this appendix have coefficients in \(\mathbb Z_2\).
Recall that we use reduced homology, so
\(\widetilde H_{-1}(K;\mathbb Z_2)=0\) if and only if \(K\) has a vertex.
For an integer \(q\), we say that \(K\) is
\emph{homologically \(q\)-connected} if
\(\widetilde H_i(K;\mathbb Z_2)=0\) for every \(-1\leq i\leq q\).
For two simplicial complexes \(K_1\) and \(K_2\), the K\"unneth formula for joins over a field \cite[(9.12)]{inbook} yields
\begin{equation}
\label{eq:join-kunneth}
 \widetilde H_n(K_1*K_2;\mathbb Z_2)
 \cong
 \bigoplus_{a+b=n-1}
 \widetilde H_a(K_1;\mathbb Z_2)
 \otimes_{\mathbb Z_2}
 \widetilde H_b(K_2;\mathbb Z_2).
\end{equation}
Here, the indexes \(a\) and \(b\) in the direct sum are integers from \(\{-1,\dots,n\}\). In particular, the join of a homologically \(p\)-connected complex and a
homologically \(q\)-connected complex is homologically
\((p+q+2)\)-connected.

We shall use the following form of Meshulam's lemma; see
\cite[Proposition~2.6]{DeLoeraGoaocMeunierMustafa2019}. Here and below,
\(\#I\) denotes the cardinality of a finite set \(I\).

\begin{lemma}[Meshulam's lemma]
\label{lemma meshulam}
Let \(K\) be a finite simplicial complex on a ground set \(V\), and let
\(\lambda\colon V\to[m]\) be a labeling. If
\[
 \widetilde H_{\#I-2}\bigl(K[\lambda^{-1}(I)];\mathbb Z_2\bigr)=0
\]
for every nonempty \(I\subseteq[m]\), then \(K\) contains an
\((m-1)\)-dimensional face on which \(\lambda\) is a bijection onto \([m]\).
\end{lemma}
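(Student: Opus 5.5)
We prove Meshulam's lemma in the standard way: induct on \(m\) via a parity count of rainbow faces, in the spirit of Sperner's lemma. The plan is to prove a stronger statement. Under the hypotheses, the number of \((m-1)\)-faces of \(K\) on which \(\lambda\) is a bijection onto \([m]\) is odd; call such faces \emph{rainbow}. More precisely, for the chain \(c_I\) equal to the sum of all rainbow faces of \(K[\lambda^{-1}(I)]\) with respect to \(I\), the claim is that \(c_I\) is homologous to an \emph{odd} chain in an appropriate sense. Working directly with homology is cleaner, so we set it up as follows.

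Fix a vertex ordering. Define a chain map \(f\colon \widetilde C_\ast(K)\to \widetilde C_\ast(\Delta_{[m]})\) by sending a face \(\sigma\) to \(\lambda(\sigma)\) if \(\lambda\) is injective on \(\sigma\), and to \(0\) otherwise. Over \(\mathbb Z_2\) this is the simplicial map induced by \(\lambda\), with degenerate simplices killed, so it commutes with \(\partial\). The number of rainbow faces mod \(2\) equals the coefficient of \([m]\) in \(f(\gamma)\) for a suitable chain \(\gamma\). We build, by induction on \(j=0,1,\dots,m-1\), chains \(\gamma_j\in \widetilde C_j(K[\lambda^{-1}([j+1])])\) satisfying \(\partial\gamma_j=\gamma_{j-1}\) and \(f(\gamma_j)=\) the sum of all \(j\)-faces of \(\Delta_{[j+1]}\), that is, the face \([j+1]\) itself plus possibly degenerate terms that vanish.

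The base step uses \(\widetilde H_{-1}(K[\lambda^{-1}(\{1\})])=0\), which gives a vertex with label \(1\); take \(\gamma_0\) to be that vertex. For the inductive step, let \(\beta\) be the sum of all \(j\)-faces of \(\Delta_{[j+1]}\); this is the boundary of the simplex \([j+1]\), viewed in one dimension up. Consider the chain \(z=\sum_{i}\gamma^{(i)}_{j-1}\), where we need such chains for every \(j\)-subset of \([j+1]\), not just the initial segment. So the induction must be over all subsets \(I\): for every \(I\) with \(\#I=j\) we construct \(\gamma_I\) supported in \(K[\lambda^{-1}(I)]\) with \(f(\gamma_I)=I\). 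For the step, the chain \(z=\sum_{i\in I}\gamma_{I\setminus\{i\}}\) has \(f(\partial z)=\partial\partial I=0\). This alone is insufficient, so we instead demand the stronger compatibility \(\partial\gamma_I=\sum_{i\in I}\gamma_{I\setminus\{i\}}\), maintained inductively. Then \(z\) is a \((\#I-2)\)-cycle in \(K[\lambda^{-1}(I)]\), because \(\partial z\) counts each \(\gamma_{I\setminus\{i,i'\}}\) twice. By hypothesis \(z=\partial\gamma_I\) for some \(\gamma_I\), and \(f(\gamma_I)\) is a chain in \(\Delta_I\) of top dimension, hence equal to \(\epsilon I\) with \(\epsilon\in\mathbb Z_2\). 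Since \(f\) is a chain map, \(\partial(\epsilon I)=f(z)=\sum_i (I\setminus\{i\})=\partial I\), which forces \(\epsilon=1\). The case \(I=\{i\}\) uses \(\widetilde C_{-1}\) and augmentation, and the compatibility holds trivially there.

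Finally, taking \(I=[m]\) gives \(f(\gamma_{[m]})=[m]\). Hence \(\gamma_{[m]}\) contains an odd number of \((m-1)\)-faces mapped bijectively onto \([m]\); in particular at least one such face exists. The main obstacle is the bookkeeping in the inductive step: correctly setting up the compatibility \(\partial\gamma_I=\sum_i\gamma_{I\setminus\{i\}}\) across all subsets, and handling the reduced degree \(-1\) and augmentation at the base.
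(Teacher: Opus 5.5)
Your argument is correct and is the standard chain-map (acyclic-carrier) proof of Meshulam's lemma; the paper does not prove the lemma itself but cites it from the survey of De Loera, Goaoc, Meunier and Mustafa. In your proof, the chains \(\gamma_I\subseteq K[\lambda^{-1}(I)]\) with \(\partial\gamma_I=\sum_{i\in I}\gamma_{I\setminus\{i\}}\) come from the acyclicity hypothesis, and \(f(\gamma_I)=I\) is forced by \(\partial f(\gamma_I)=\partial I\ne 0\). One correction to your opening plan: \(K\) itself need not have an odd number of rainbow \((m-1)\)-faces (take \(m=1\) and \(K\) two vertices labeled \(1\)), but your final step only uses that \(\gamma_{[m]}\) contains an odd number of them, which is true.
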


We next recall the definition of zero-avoiding complex. Fix a finite vertex set \(V\), let
\(A\colon|\Delta_V|\to\mathbb R^d\) be affine, and set
\(\mathcal C_A:=\{S\subseteq V:0\notin A(\Delta_S)\}\).
Thus
\begin{equation}
\label{eq:zero-avoiding-dual}
 \mathcal C_A^*
 =
 \bigl\{F\subseteq V:
 0\in A(\Delta_{V\setminus F})\bigr\}.
\end{equation}





The zero-avoiding complex \(\mathcal C_A\) is a special case of the support
complexes of oriented matroids studied by Holmsen in \cite{Holmsen2016}.
Holmsen proved that if the associated oriented matroid has rank at most \(d\),
then its support complex is near-\((d-1)\)-Leray; see
\cite[Proposition~1.3]{Holmsen2016}. Together with
\cite[Lemma~3.1]{Holmsen2016}, this result yields
Lemma~\ref{lemma:zero-avoiding-homology} below. The two assertions of Lemma~\ref{lemma:zero-avoiding-homology} also appear
in Blagojevi\'c's work: assertion~(1) follows from
\cite[Lemma~2.11]{Blag}, while assertion~(2) is \cite[Lemma~2.15]{Blag}.

\begin{lemma}
\label{lemma:zero-avoiding-homology}
Let \(A\colon|\Delta_V|\to\mathbb R^d\) be an affine map.
\begin{enumerate}

\item \(\widetilde H_i(\mathcal C_A^*;\mathbb Z_2)
  =0\) for every \(i\leq \#V-d-3\).

 \item For every proper subset \(U\subsetneq V\),
 \(\widetilde H_i(\mathcal C_A^*[U];\mathbb Z_2)=0\) whenever
 \(i\leq \#U-d-2\).
\end{enumerate}
\end{lemma}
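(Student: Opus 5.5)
The plan is to reduce both assertions, via combinatorial Alexander duality, to statements about $\mathcal C_A$ and its links. Those statements I will then read off from a nerve-theorem description of $\mathcal C_A$ as the nerve of open hemispheres. Throughout, homology and cohomology are with $\mathbb Z_2$ coefficients, and I use the form of Alexander duality for a complex $K\subsetneq 2^W$ on an $m$-element ground set $W$:
\[
\widetilde H_i(K^*)\cong \widetilde H^{\,m-i-3}(K).
\]
The degenerate cases will be handled separately.

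\textbf{Step 1: nerve description.} By the separation theorem for finite point sets, $S\in\mathcal C_A$ if and only if there is a linear functional $h$ with $h(A(v))>0$ for all $v\in S$. For $v\in V$ put
\[
U_v:=\{h\in S^{d-1}: h(A(v))>0\},
\]
which is an open hemisphere, or empty if $A(v)=0$. Then $\mathcal C_A$ is exactly the nerve of the family $\{U_v\}_{v\in V}$. Every nonempty intersection of the $U_v$ is an intersection of open hemispheres, hence geodesically convex inside an open hemisphere, hence contractible. The nerve theorem for finite open covers then gives
\[
\mathcal C_A\simeq \bigcup_v U_v,
\]
which is an open subset of $S^{d-1}$. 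An open subset of a $(d-1)$-manifold has vanishing (co)homology in degrees $\ge d$.

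\textbf{Step 2: assertion (1).} Let $n=\#V$. If $\mathcal C_A=\Delta_V$, then $\mathcal C_A^*$ is the void complex and the statement holds trivially or by convention. Otherwise Alexander duality gives $\widetilde H_i(\mathcal C_A^*)\cong\widetilde H^{n-i-3}(\mathcal C_A)$. For $i\le n-d-3$ we have $n-i-3\ge d$, so this group vanishes by Step 1.

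\textbf{Step 3: assertion (2).} Fix $U\subsetneq V$ and put $W:=V\setminus U\neq\varnothing$.

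If $W\notin\mathcal C_A$, then every $F\subseteq U$ satisfies $0\in A(\Delta_{V\setminus F})$, because $V\setminus F\supseteq W$. Hence $\mathcal C_A^*[U]=\Delta_U$, which is acyclic when $U\neq\varnothing$; the case $U=\varnothing$ is immediate from the index range.

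If $W\in\mathcal C_A$, I compute the Alexander dual of $\mathcal C_A^*[U]$ inside $U$. It consists of those $G\subseteq U$ with $U\setminus G\notin\mathcal C_A^*$, that is, $0\notin A(\Delta_{W\cup G})$. This is exactly the link $\mathrm{lk}_{\mathcal C_A}(W)$, viewed on ground set $U$. Duality with $m=\#U$ then turns the required vanishing for $i\le \#U-d-2$ into
\[
\widetilde H^{j}(\mathrm{lk}_{\mathcal C_A}(W))=0 \quad\text{for } j\ge d-1.
\]
To prove this I repeat the nerve argument with the sets $U_v\cap U_W$ for $v\in U$, where $U_W:=\bigcap_{w\in W}U_w$. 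The link is the nerve of this cover, and all nonempty intersections are again convex, so the link is homotopy equivalent to an open subset of $U_W$. Since $W\in\mathcal C_A$, the set $U_W$ is a nonempty convex open subset of an open hemisphere, hence homeomorphic to $\mathbb R^{d-1}$. An open subset of $\mathbb R^{d-1}$ has vanishing homology in degrees $\ge d-1$, which is what is needed.

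\textbf{Expected obstacle.} The geometry is clean; the main risk is in the bookkeeping. Two points need care. First, the exact index shift in combinatorial Alexander duality, including the conventions for the void complex, for $\{\varnothing\}$, and for vertices that are not faces, such as those with $A(v)=0$ or those not in the link. Second, confirming that the identification of the dual of $\mathcal C_A^*[U]$ with the link uses $U$ rather than $V$ as ground set. I would first verify the degree arithmetic on small examples, for instance $d=1$ with a few points on a line, before writing out the details.
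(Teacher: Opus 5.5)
Your argument is correct, but it takes a different route from the paper. The paper gives no proof of its own. It cites either Holmsen's theorem that support complexes of oriented matroids of rank at most $d$ are near-$(d-1)$-Leray (together with his Lemma~3.1, which converts this into the stated vanishing for the Alexander dual), or Blagojevi\'c's Lemmas~2.11 and~2.15.

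You instead prove the affine case directly:
\begin{itemize}
\item The description of $\mathcal C_A$ as the nerve of open hemispheres gives $\widetilde H^{j}(\mathcal C_A)=0$ for $j\ge d$.
\item Describing $\mathrm{lk}_{\mathcal C_A}(W)$, for nonempty faces $W$, as the nerve of a cover of an open subset of $U_W\cong\mathbb R^{d-1}$ gives vanishing for $j\ge d-1$. This is exactly the near-Leray input.
\item Identifying the Alexander dual of $\mathcal C_A^*[U]$ inside $2^U$ with $\mathrm{lk}_{\mathcal C_A}(W)$ plays the role of Holmsen's translation lemma.
\end{itemize}
Your index shifts are right: $i\le \#V-d-3\iff \#V-i-3\ge d$, and $i\le\#U-d-2\iff\#U-i-3\ge d-1$.

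Your route gives a short, self-contained proof from the nerve theorem and combinatorial Alexander duality. That is all the appendix needs, since $A$ is affine there. The citation buys generality to arbitrary, possibly non-realizable, oriented matroids, where no hemisphere cover is directly available.

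When you write it up, handle the one subcase of Step~3 that duality does not cover: $V\in\mathcal C_A$, i.e.\ $0\notin\conv A(V)$. There $\mathrm{lk}_{\mathcal C_A}(W)=2^U$ is not a proper subcomplex, but $\mathcal C_A^*[U]$ is void, so the claim holds trivially. Also note that for $d=1$ the vanishing in degree $d-1=0$ depends on using reduced homology (an open subset of a point), which you do. For $d=0$ the second subcase never arises, since then $\mathcal C_A=\{\varnothing\}$.
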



\subsection{Meshulam-admissible complexes}

\begin{definition}
\label{definition:meshulam-admissible}
Let \(m\geq1\), let \(V_1,\ldots,V_m\) be pairwise disjoint finite sets, and
put \(V=V_1\sqcup\cdots\sqcup V_m\). A subcomplex
\(J\subseteq V_1*\cdots*V_m\), regarded as a complex on the ground set \(V\),
is called \emph{Meshulam-admissible} if:
\begin{enumerate}
 \item \(J\) is homologically \((m-2)\)-connected;
 \item for every proper subset \(U\subsetneq V\) satisfying
 \(U\cap V_i\ne\varnothing\) for all \(i\in[m]\), the induced subcomplex
 \(J[U]\) is homologically \((m-3)\)-connected.
\end{enumerate}
\end{definition}


\begin{theorem}
\label{theorem:meshulam-admissible}
Let \(K\subseteq V_1*\cdots*V_{d+1}\) be Meshulam-admissible, and let
\(A\colon|\Delta_V|\to\mathbb R^d\) be a colorful Carath\'eodory map.
Then there exists a face \(\sigma\in K\) such that \(0\in A(\sigma)\).
\end{theorem}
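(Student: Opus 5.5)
The plan is to argue by contradiction and apply Meshulam's lemma (Lemma~\ref{lemma meshulam}) to a join of $K$ with the Alexander dual $\mathcal C_A^*$ of the zero-avoiding complex. I have not found the labeling that makes the final contradiction go through, and I flag this below as the main gap.

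\textbf{Reduction.} Suppose that $0\notin A(\sigma)$ for every $\sigma\in K$. Let $\sigma\in K$. Then $\conv A(\sigma)$ is the union of the sets $A(\tau)$ over $\tau\subseteq\sigma$, and all these $\tau$ lie in $K$. Hence $0\notin A(\Delta_\sigma)$, so $K\subseteq\mathcal C_A$. By \eqref{eq:zero-avoiding-dual}, this is the same as saying that $V\setminus\sigma\notin\mathcal C_A^*$ for every $\sigma\in K$. The colorful hypothesis $0\in A(V_i)$ translates to $V\setminus V_i\in\mathcal C_A^*$ for every $i$. The goal is to produce a face $\sigma\in K$ with $V\setminus\sigma\in\mathcal C_A^*$.

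\textbf{Meshulam's lemma.} Form $L:=K*\mathcal C_A^*$ on the disjoint copies $V^{(1)}\sqcup V^{(2)}$. As a first attempt, take the labeling $\lambda$ that gives each $v^{(1)}$ its color $i\in[d+1]$ and gives every $v^{(2)}$ the label $d+2$, so $m=d+2$. For a nonempty $I\subseteq[d+2]$ we need $\widetilde H_{\#I-2}(L[\lambda^{-1}(I)];\mathbb Z_2)=0$. There are three cases.
\begin{enumerate}
\item If $I=[d+1]$, the induced complex is $K$ itself, and the required vanishing of $\widetilde H_{d-1}$ is exactly condition~(1) of Definition~\ref{definition:meshulam-admissible}.
\item If $I=[d+2]$, the induced complex is $K*\mathcal C_A^*$. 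By \eqref{eq:join-kunneth} it suffices that $K$ is homologically $(d-1)$-connected and that $\mathcal C_A^*$ is homologically $(\#V-d-3)$-connected, the latter by Lemma~\ref{lemma:zero-avoiding-homology}(1). The degenerate case $\#V=d+1$ is handled separately: there $K=V_1*\cdots*V_{d+1}$ and Theorem~\ref{theorem colorful caratheodory} applies.
\item If $I$ misses some colors, the induced complex is a join of $K[U]$ with $\mathcal C_A^*$ or with nothing. Here I would use condition~(2) for $K[U]$, together with Lemma~\ref{lemma:zero-avoiding-homology}(2) for the proper induced subcomplexes $\mathcal C_A^*[U]$.
\end{enumerate}

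\textbf{Main obstacle.} With this labeling, a rainbow face has the form $\sigma\sqcup\{v\}$, where $\sigma$ is a transversal facet of $K$ and $0\in A(\Delta_{V\setminus\{v\}})$. This does not by itself contradict $K\subseteq\mathcal C_A$, so the labeling must be adjusted. I see two options.
\begin{itemize}
\item Label $v^{(2)}$ by its own color and work in the deleted join, where faces are pairs $\sigma^{(1)}\sqcup F^{(2)}$ with $\sigma\cap F=\varnothing$. One then needs a rainbow face whose $\mathcal C_A^*$-part is the complement of the $K$-part. The missing-color cases would be governed by $K[U]$ and $\mathcal C_A^*[U]$ via condition~(2) and Lemma~\ref{lemma:zero-avoiding-homology}(2).
\item Run an induction on $\#V$. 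Use the rainbow vertex $v$ to delete $v$, and invoke condition~(2) on the restriction to $V\setminus\{v\}$. This requires checking that the colorful hypothesis survives the deletion, via $V\setminus V_i\in\mathcal C_A^*$.
\end{itemize}
I would try the first option first. I expect the homology bookkeeping, namely matching the indices $\#I-2$ against the connectivity bounds $m-2$, $m-3$ and $\#U-d-2$, to be routine once the correct labeling is fixed. Choosing that labeling is the real difficulty, and neither option above has been checked to produce the contradiction.
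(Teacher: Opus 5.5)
Your proposal has a genuine gap, and it is exactly the one you flag: the labeling, which carries the whole argument. Suppose the labels are colors, with or without the extra label $d+2$, in the join or in a deleted join. Then a fully labeled face has at most $d+2$ vertices, so its $\mathcal C_A^*$-part $F$ is small. Knowing $F\in\mathcal C_A^*$ then only gives $0\in A(\Delta_{V\setminus F})$ for a set $V\setminus F$ far larger than the $K$-part, and neither of your two options changes this. The missing idea is to use the finest labeling. Put $m=\#V$, identify $V$ with $[m]$, and on the ordinary join $\mathcal C_A^* * K$ set $\lambda(v^{(1)})=\lambda(v^{(2)})=v$. A fully labeled face then contains exactly one of the two copies of each $v\in V$. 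So it is $F^{(1)}\cup G^{(2)}$ with $F\sqcup G=V$, $F\in\mathcal C_A^*$ and $G\in K$. By \eqref{eq:zero-avoiding-dual}, $0\in A(\Delta_{V\setminus F})=A(\Delta_G)$, and an inclusion-minimal $\sigma\subseteq G$ with $0\in A(\sigma)$ is the required face of $K$. You need neither a contradiction hypothesis nor a deleted join: the complementarity you were looking for is forced by the labels.

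With this labeling, $\lambda^{-1}(U)$ induces $\mathcal C_A^*[U]*K[U]$ for every nonempty $U\subseteq V$. By \eqref{eq:join-kunneth}, the hypothesis of Lemma~\ref{lemma meshulam} reduces to $\widetilde H_a(\mathcal C_A^*[U];\mathbb Z_2)\otimes\widetilde H_b(K[U];\mathbb Z_2)=0$ for $a+b=\#U-3$. There are three cases.
\begin{itemize}
\item If $U=V$, Lemma~\ref{lemma:zero-avoiding-homology}(1) leaves only $a\ge\#V-d-2$, i.e.\ $b\le d-1$, and condition~(1) kills the second factor.
\item If $U\subsetneq V$ meets every color class, Lemma~\ref{lemma:zero-avoiding-homology}(2) leaves only $a\ge\#U-d-1$, so $b\le d-2$ and condition~(2) applies.
\item If $U$ misses some $V_i$, condition~(2) says nothing about $K[U]$. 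This is also why your case~(3) fails: for $d=2$ and $V_3=\{v_3\}$, the cone $T*\{v_3\}$ is Meshulam-admissible for every $T\subseteq V_1*V_2$, yet $K[V_1\cup V_2]=T$ may be disconnected. Here one uses the colorful hypothesis instead. Every $F\subseteq U$ satisfies $0\in A(V_i)\subseteq A(\Delta_{V\setminus F})$, so $\mathcal C_A^*[U]=\Delta_U$ is a simplex and the join is acyclic.
\end{itemize}
The third case is the only place where $0\in A(V_i)$ is used.
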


\begin{proof}
Consider the complex \(\mathcal C_A^* * K\) on
\(V^{(1)}\sqcup V^{(2)}\), where the first factor is supported on
\(V^{(1)}\) and the second on \(V^{(2)}\). Let \(m:=\#V\), identify \(V\)
with \([m]\), and label the vertices of \(\mathcal C_A^* *K\) by
\(\lambda(v^{(1)})=\lambda(v^{(2)})=v\).

For every nonempty \(U\subseteq V\), we have \( (\mathcal C_A^*K)[\lambda^{-1}(U)]=\mathcal C_A^*[U]*K[U]. \)
Let us show that one can apply Lemma~\ref{lemma meshulam} to $\mathcal C_A^* *K$. For that, it is enough to show
\[
\widetilde H_{\#U-2}
\bigl(\mathcal C_A^*[U]*K[U];\mathbb Z_2\bigr)=0.
\]
By the K\"unneth formula, it remains to prove that
\begin{equation}
\label{equation direct sum is 0}
 \widetilde H_a(\mathcal C_A^*[U];\mathbb Z_2)
 \otimes_{\mathbb Z_2}
 \widetilde H_b(K[U];\mathbb Z_2)=0
\end{equation}
for all \(a,b\in\{-1,\ldots,\#U-2\}\) with
\(a+b=\#U-3\). We distinguish three cases.

First, suppose that \(U=V\). If the first factor
in~\eqref{equation direct sum is 0} is nonzero, then
Lemma~\ref{lemma:zero-avoiding-homology} gives
\(a\geq\#V-d-2\). Hence \(b\leq d-1\), so the second factor
in~\eqref{equation direct sum is 0} vanishes by condition~(1)
of Definition~\ref{definition:meshulam-admissible}.

Next, suppose that \(U\subsetneq V\) and \(U\cap V_i=\varnothing\) for some
\(i\). Then \(V_i\subseteq V\setminus U\), and the colorful assumption
gives \(0\in A(V_i)\subseteq A(\Delta_{V_i})\subseteq A(\Delta_{V\setminus U})\).
Consequently,
\(\mathcal C_A^*[U]=\Delta_U\), which means that
\(\mathcal C_A^*[U]\) is contractible as well as \(\mathcal C_A^*[U]*K[U]\).

Finally, suppose that \(U\subsetneq V\) meets every color class. If the first
factor in~\eqref{equation direct sum is 0} is nonzero,
Lemma~\ref{lemma:zero-avoiding-homology} gives
\(a\geq\#U-d-1\). Therefore,
\(b=\#U-3-a\leq d-2\), and the second factor in~\eqref{equation direct sum is 0} vanishes by condition~(2) of
Definition~\ref{definition:meshulam-admissible}.

Lemma~\ref{lemma meshulam} now gives a fully labeled face
\(F^{(1)}\cup G^{(2)}\) of \(\mathcal C_A^* *K \), where \(F\in\mathcal C_A^*\), \(G\in K\),
and \(F\sqcup G=V\). Since \(F\in\mathcal C_A^*\), we have
\(0\in A(\Delta_{V\setminus F})=A(\Delta_G)\). Choose an inclusion-minimal face
\(\sigma\subseteq G\) such that \(0\in A(\sigma)\).
\end{proof}

\begin{lemma}
Let \(J\subseteq V_1*\cdots*V_m\) be Meshulam-admissible, and let
\(V_{m+1}\) be a nonempty finite set disjoint from
\(V=V_1\sqcup\cdots\sqcup V_m\). Then \(J*V_{m+1}\) is
Meshulam-admissible with respect to \(V_1,\ldots,V_{m+1}\).
\end{lemma}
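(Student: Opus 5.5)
The plan is to verify the two conditions of Definition~\ref{definition:meshulam-admissible} for \(J':=J*V_{m+1}\) with \(m+1\) colors, using the K\"unneth formula~\eqref{eq:join-kunneth} for joins. The only input about \(V_{m+1}\) is that a nonempty finite set \(W\), regarded as a \(0\)-dimensional complex, is homologically \((-1)\)-connected: it has a vertex, so \(\widetilde H_{-1}(W;\mathbb Z_2)=0\). Its only possibly nonzero reduced homology is \(\widetilde H_0(W;\mathbb Z_2)\), which is \(\mathbb Z_2^{\#W-1}\).

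\textbf{Condition (1).} The complex \(J\) is homologically \((m-2)\)-connected and \(V_{m+1}\) is homologically \((-1)\)-connected. By the consequence of~\eqref{eq:join-kunneth} recorded in the appendix, the join is homologically \((m-2)+(-1)+2=(m-1)\)-connected. This is exactly condition (1) for \(m+1\) colors.

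\textbf{Condition (2).} Let \(U'\subsetneq V\sqcup V_{m+1}\) meet every \(V_i\), \(i\in[m+1]\). Write \(U=U'\cap V\) and \(W=U'\cap V_{m+1}\ne\varnothing\). Induced subcomplexes commute with joins on disjoint vertex sets, so
\[
J'[U']=J[U]*W.
\]
We need \(J'[U']\) to be homologically \((m-2)\)-connected. There are two cases.

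\emph{Case \(U=V\).} Then \(W\subsetneq V_{m+1}\). Here \(J[U]=J\) is \((m-2)\)-connected, so \(J[U]*W\) is \((m-2)+(-1)+2=(m-1)\)-connected, which is more than enough.

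\emph{Case \(U\subsetneq V\).} Since \(U\) still meets each \(V_i\) with \(i\le m\), condition (2) for \(J\) says \(J[U]\) is \((m-3)\)-connected. The join with \(W\) is then \((m-3)+(-1)+2=(m-2)\)-connected, as required.

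The main step requiring care is the bookkeeping of degree \(-1\) terms in~\eqref{eq:join-kunneth}. I would check it directly rather than rely only on the quoted connectivity rule. For \(n\le m-2\) (resp. \(n\le m-1\)) each summand \(\widetilde H_a(J[U])\otimes\widetilde H_b(W)\) with \(a+b=n-1\) vanishes for the following reasons:
\begin{itemize}
\item if \(b=-1\), then \(\widetilde H_{-1}(W)=0\);
\item if \(b\ge 1\), then \(\widetilde H_b(W)=0\);
\item if \(b=0\), then \(a=n-1\le m-3\) (resp. \(\le m-2\)), and \(\widetilde H_a(J[U])=0\) by the connectivity of \(J[U]\).
\end{itemize}
This also covers \(n=-1\), where the join has a vertex because \(W\ne\varnothing\). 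With these vanishings, both conditions hold and the lemma follows.
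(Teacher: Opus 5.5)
Your proof is correct and follows the paper's argument. Both conditions come from the K\"unneth connectivity rule for joins, using that a nonempty \(0\)-dimensional complex is homologically \((-1)\)-connected and that \((J*V_{m+1})[U']=J[U'\cap V]*(U'\cap V_{m+1})\); your explicit split into the cases \(U'\cap V=V\) and \(U'\cap V\subsetneq V\) spells out a point the paper leaves implicit (when \(U'\cap V=V\), \(J\) is \((m-2)\)-connected by condition~(1), hence \((m-3)\)-connected).
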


\begin{proof}
The complex \(V_{m+1}\), as well as each of its nonempty induced
subcomplexes, is homologically \((-1)\)-connected. Hence \(J*V_{m+1}\) is
homologically \((m-1)\)-connected. Now let
\(U\subsetneq V\sqcup V_{m+1}\) meet every color class. Then
\(J[U\cap V]\) is homologically \((m-3)\)-connected, so
\(J[U\cap V]*(U\cap V_{m+1})\) is homologically
\((m-2)\)-connected. Thus both conditions in
Definition~\ref{definition:meshulam-admissible} hold.
\end{proof}

\begin{corollary}
\label{corollary:meshulam-spanning-tree}
Let \(T\subseteq V_1*\cdots*V_{k+1}\) be a
\(\mathbb Z_2\)-spanning \(k\)-tree, where \(0\leq k\leq d\). Then
\(K:=T*V_{k+2}*\cdots*V_{d+1}\) is Meshulam-admissible. Consequently,
Theorem~\ref{theorem:meshulam-admissible} implies
Theorem~\ref{general theorem main}.
\end{corollary}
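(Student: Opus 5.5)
By the preceding lemma, joining a Meshulam-admissible complex with a nonempty color class preserves admissibility. So I will apply it \(d-k\) times, starting from \(T\subseteq V_1*\cdots*V_{k+1}\). The task then reduces to showing that \(T\) itself is Meshulam-admissible with \(m=k+1\). That means:
\begin{enumerate}
\item \(T\) is homologically \((k-1)\)-connected;
\item \(T[U]\) is homologically \((k-2)\)-connected for every proper \(U\subsetneq V_1\sqcup\cdots\sqcup V_{k+1}\) meeting every \(V_i\).
\end{enumerate}
The final claim, that Theorem~\ref{theorem:meshulam-admissible} implies Theorem~\ref{general theorem main}, is then immediate. One also checks the degenerate case \(k=0\) separately: \(T\) is a single vertex of \(V_1\), so \(\widetilde H_{-1}(T)=0\). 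For a proper \(U\) meeting \(V_1\), the required connectivity index is \(-2\), so the condition is vacuous.

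For (1), in degrees \(i\le k-2\) the complex \(T\) has the same homology as \(X:=V_1*\cdots*V_{k+1}\), because it contains the full \((k-1)\)-skeleton. Now \(X\) is a join of \(k+1\) nonempty discrete sets, hence a wedge of \(k\)-spheres. By the Künneth formula for joins~\eqref{eq:join-kunneth}, it is homologically \((k-1)\)-connected. The remaining degree is \(k-1\). Every \((k-1)\)-cycle of \(T\) is a \((k-1)\)-cycle of \(X\), so it bounds a \(k\)-chain \(c\) in \(X\). For each facet \(\sigma\) of \(c\) not in \(T\), maximality (condition (2) of Definition~\ref{SpanningTree}) supplies a \(k\)-cycle \(z_\sigma\) in \(T\cup\{\sigma\}\), which must contain \(\sigma\) since \(\widetilde H_k(T)=0\). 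Replacing \(c\) by \(c+\sum_\sigma z_\sigma\) gives a chain in \(T\) with the same boundary. Hence \(\widetilde H_{k-1}(T)=0\).

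For (2), the same skeleton argument applies. The complex \(T[U]\) contains the full \((k-1)\)-skeleton of \(X[U]=(U\cap V_1)*\cdots*(U\cap V_{k+1})\). The latter is again a join of \(k+1\) nonempty discrete sets, hence \((k-1)\)-connected. In degrees \(\le k-2\), the homology of a complex depends only on its \((k-1)\)-skeleton, so \(\widetilde H_i(T[U])=\widetilde H_i(X[U])=0\) for \(i\le k-2\). This step needs no tree property at all.

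The main obstacle is the degree-\((k-1)\) step in (1). There one must make sure the cycle \(z_\sigma\) produced by maximality genuinely contains \(\sigma\). This holds because a nonzero class in \(\widetilde H_k(T\cup\{\sigma\})\) is represented by a nonzero \(k\)-cycle (there are no \(k+1\)-faces, so cycles equal classes), and any such cycle avoiding \(\sigma\) would lie in \(T\), where \(\widetilde H_k(T)=0\) forbids it. The rest is bookkeeping with the join lemma and Künneth.
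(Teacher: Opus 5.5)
Your proposal is correct and follows the paper's proof: you reduce to the admissibility of \(T\) via the join lemma, use the full \((k-1)\)-skeleton for vanishing through degree \(k-2\) and for the induced-subcomplex condition, use both tree conditions for degree \(k-1\), and handle \(k=0\) separately. Your explicit chain correction \(c\mapsto c+\sum_\sigma z_\sigma\) spells out the degree-\((k-1)\) step, which the paper only asserts.
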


\begin{proof}
First, \(T\) is Meshulam-admissible with respect to
\(V_1,\ldots,V_{k+1}\). Indeed, for \(k\geq1\), the full-skeleton condition
gives the required homology vanishing through degree \(k-2\), while the
two conditions in Definition~\ref{SpanningTree} imply the vanishing in
degree \(k-1\). For every proper subset meeting all \(k+1\) color classes, the
corresponding induced subcomplex of \(T\) contains the full
\((k-1)\)-skeleton and is therefore homologically
\((k-2)\)-connected. The case \(k=0\) is immediate. Repeated application of the join
lemma, with \(V_{k+2},\ldots,V_{d+1}\), now proves the result.
\end{proof}

\end{document}